\newcommand{\C}{\ensuremath{\mathbb{C}}}
\newcommand{\R}{\ensuremath{\mathbb{R}}}
\newcommand{\Z}{\ensuremath{\mathbb{Z}}}
\newcommand{\abs}[1]{\ensuremath{\lvert#1\rvert}}
\newcommand{\abss}[1]{\ensuremath{\left\lvert#1\right\rvert}}
\newcommand{\eps}{\epsilon}
\newtheorem{theorem}{Theorem}[section]
\newtheorem{lemma}[theorem]{Lemma}
\newtheorem{proposition}[theorem]{Proposition}
\newtheorem{corollary}[theorem]{Corollary}
\theoremstyle{definition}
\newtheorem{definition}[theorem]{Definition}
\DeclareMathOperator{\codim}{codim}
\DeclareMathOperator{\supp}{supp}
\DeclareMathOperator{\spn}{span}
\DeclareMathOperator{\Tr}{Tr}
\DeclareMathOperator{\Sym}{Sym}
\DeclareMathOperator{\Spec}{Spec} % Spectrum
\title{Singularity of Random Matrices over Finite Fields}
\author{Kenneth Maples}
\address{Institut f\"ur Mathematik\\ Universit\"at Z\"urich\\ Winterthurerstrasse 190\\ 8057-Z\"urich, Switzerland}
\email{kenneth.maples@math.uzh.ch}
\begin{document}

\begin{abstract}
Let $A$ be an $n \times n$ random matrix with iid entries over a finite field of order $q$.  Suppose that the entries do not take values in any additive coset of the field with probability greater than $1 - \alpha$ for some fixed $0 < \alpha < 1$.  We show that the singularity probability converges to the uniform limit with an exponentially small error depending only on $\alpha$. We also show that the distribution of the determinant of $A$ converges to its limiting distribution at an exponential rate.
\end{abstract}

\subjclass[2010]{Primary 15B52; Secondary 15B33, 60C05}
\thanks{The author was supported by a Graduate Research Fellowship from the National Science Foundation. This article was developed from part of the author's PhD thesis \cite{PHD}}

\maketitle

\section{Introduction}

Let $q = p^f$ be a prime power and let $\mathbb{F}_q$ be a finite field with $q$ elements.  Suppose $\xi$ is a random variable that takes values in $\mathbb{F}_q$ with probability distribution $\mu$. We say that $\mu$ is \emph{$\alpha$-dense} for $0 < \alpha < 1$ if for every additive subgroup $T \leq \mathbb{F}_q$ and $s \in \mathbb{F}_q$,
    \[
      \mathbb{P}(\xi \in s + T) \leq 1 - \alpha.
    \]
Let $A$ be an $n \times n$ random matrix whose entries are iid copies of $\xi$.  It is a classical problem to compute the typical spectral properties of $A$. The most simple question is to determine how often the matrix is singular.   As it turns out, as long as the distribution of $A$ is not sparse in the sense that its entries take values from an $\alpha$-dense distribution, the probability of singularity converges rapidly to the expected value. The purpose of this article is to give an effective bound for this rate.
\begin{theorem} \label{thm:main}
    Let $\mathbb{F}_{q}$ with $q = p^j$ and suppose $A \in \text{M}(n,\mathbb{F}_q)$ is a random matrix with iid entries which take values from an $\alpha$-dense probability distribution.  Then we have the estimate
    \[
      \mathbb{P}(A \text{ is non-singular}) = \prod_{k=1}^\infty (1 - q^{-k}) + O(e^{-c \alpha n})
    \]
where the implied constant and $c > 0$ are absolute.
\end{theorem}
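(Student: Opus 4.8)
The plan is to reveal the columns of $A$ from left to right and control, at each stage, the conditional probability that the newly exposed column lies in the span of those already seen. Write $C_1,\dots,C_n$ for the columns, $V_k=\spn(C_1,\dots,C_k)$, and observe that $A$ is non-singular precisely when $\dim V_k=k$ for every $k$, i.e.\ when $C_{k+1}\notin V_k$ for each $k=0,\dots,n-1$. On the event $\{\dim V_k=k\}$ one has $C_{k+1}\in V_k$ iff $\langle C_{k+1},w\rangle=0$ for all $w$ in the $(n-k)$-dimensional orthogonal complement $V_k^{\perp}$, so exposing the columns one at a time gives
\[
\mathbb{P}(A\text{ non-singular})=\mathbb{E}\!\left[\prod_{k=0}^{n-1}\bigl(1-Q_k(V_k)\bigr)\right],\qquad Q_k(V):=\mathbb{P}\bigl(X\perp V^{\perp}\bigr),
\]
where $X$ is a fresh independent column with iid $\alpha$-dense entries. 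The target constant satisfies $\prod_{k=1}^{\infty}(1-q^{-k})=\prod_{k=0}^{n-1}\bigl(1-q^{-(n-k)}\bigr)+O(q^{-n})$, i.e.\ it is what one gets if each $Q_k(V_k)$ equalled $q^{-(n-k)}$ --- equivalently, if the inner products of $X$ against a basis of $V_k^{\perp}$ were jointly uniform on $\mathbb{F}_q^{\,n-k}$.

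Two cheap bounds do most of the work. First, putting any basis of $V^{\perp}$ in reduced echelon form with respect to the coordinates and conditioning on the non-pivot coordinates of $X$, the $n-k$ events $\langle X,w_i\rangle=0$ become independent point conditions on distinct coordinates, so $\alpha$-density gives $Q_k(V)\le(1-\alpha)^{\,n-k}$ for every $V$ of dimension $k<n$. Hence for $k\le\tfrac34 n$ (say) we have $Q_k(V_k)\le(1-\alpha)^{n/4}$, so these stages together contribute $1+O(e^{-c\alpha n})$ to the product without any further information. Second, fixing a nontrivial additive character $\psi$ and writing $\hat\mu(t)=\mathbb{E}\,\psi(t\xi)$, expansion over characters yields the exact identity
\[
\mathbb{P}(X\perp W)=q^{-d}\Bigl(1+\sum_{0\neq v\in W}\ \prod_{j=1}^{n}\hat\mu(v_j)\Bigr)\qquad(d=\dim W),
\]
and $\alpha$-density forces $\abs{\hat\mu(t)}<1$ for every $t\neq 0$ (a near-unimodular Fourier coefficient forces concentration on a coset of that character's kernel). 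So for the remaining stages $k>\tfrac34 n$, where $\dim V_k^{\perp}=n-k$ is small, the estimate $Q_k(V_k)=q^{-(n-k)}\bigl(1+O(e^{-c\alpha n})\bigr)$ will hold provided $V_k^{\perp}$ is \emph{unstructured}: it contains no nonzero $v$ for which $\prod_j\abs{\hat\mu(v_j)}$ fails to be exponentially small --- which, by an inverse-Littlewood--Offord / Halász type analysis, happens only when the nonzero entries of $v$ are arithmetically structured (they lie in few cosets of a proper subgroup of $(\mathbb{F}_q,+)$), and in particular forces $v$ to have small support.

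What remains is to show the structured subspaces essentially never occur. For a fixed structured $v$ (small, arithmetically aligned support), $\mathbb{P}\bigl(v\in V_k^{\perp}\bigr)=\prod_{i\le k}\mathbb{P}(\langle C_i,v\rangle=0)\le(1-\alpha)^{k}$, and the number of such $v$ is small enough that a union bound over $k>\tfrac34 n$ and over all structured $v$ gives $\mathbb{P}(\exists k:\ V_k^{\perp}\text{ structured})=O(e^{-c\alpha n})$. Assembling: on the complementary (good) event, take logarithms in the displayed product; the phase-$1$ stages contribute $O(e^{-c\alpha n})$, and for the phase-$2$ stages the per-stage errors are $O(e^{-c\alpha n})$ while the denominators $1-q^{-(n-k)}\ge 1-q^{-1}\ge\tfrac12$ are bounded away from $0$, so summing the at most $n$ errors and exponentiating gives $\mathbb{P}(A\text{ non-singular})=\prod_{k=1}^{n}(1-q^{-k})+O(e^{-c\alpha n})=\prod_{k=1}^{\infty}(1-q^{-k})+O(e^{-c\alpha n})$.

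The genuine obstacle is this structured case together with forcing $c$ to be truly absolute, i.e.\ independent of $q$ and in particular of the characteristic $p$. A crude Fourier bound only gives $\abs{\hat\mu(t)}\le 1-\Omega(\alpha/p^{2})$ --- take $\xi$ nearly uniform on one coset of an index-$p$ subgroup with the leftover $\alpha$-mass on an adjacent coset --- so the naive ``unstructured'' estimate would carry a spurious dependence on $p$. The fix must be a $q$-uniform structure theorem: unless the support of $v$ is aligned with a proper subgroup, $\langle X,v\rangle$ is exactly (or nearly) uniform, and the arithmetically aligned $v$ are rare enough --- uniformly in $q$ --- that the union bound above still beats $(1-\alpha)^{k}$. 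Carrying out this classification over a general finite field, and threading it through the left-to-right exposure so that the exceptional event is controlled simultaneously for all $k$, is the technical heart of the argument.
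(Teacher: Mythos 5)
Your overall skeleton---expose the columns one at a time, split into an Odlyzko phase (large annihilator, $\alpha$-density gives an exponentially small conditional singularity probability) and a delicate uniformity phase (small annihilator)---matches the paper's, and both the Fourier identity for $\mathbb{P}(X\perp W)$ and your observation that absolute constants must survive $p\to\infty$ are correct. But the phase-2 argument as sketched has genuine gaps. First, ``$V_k^\perp$ contains no nonzero $v$ with $\prod_j|\hat\mu(v_j)|$ large'' does not imply $Q_k=q^{-(n-k)}(1+O(e^{-c\alpha n}))$: the error is a \emph{sum} over the up to $q^{n-k}-1$ nonzero $v\in V_k^\perp$, and $q^{n-k}e^{-c\alpha n}$ is not small when $q$ is large and $n-k$ is a fixed fraction of $n$. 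Second, the structured $v$ are not forced to have small support; the paper's inverse theorem explicitly produces a structured $\xi$ with $\abs{\supp\xi}\ge\delta n$, the constraint being that most coordinates lie in the small spectrum $\Spec_{1-\eps}\mu$, whose size is controlled by Kneser's theorem precisely because $\alpha$-density forbids $\Spec_{1-\gamma}\mu$ from containing a nontrivial additive subgroup. Third, the union bound over structured $v$ does not close: the exceptional set has size on the order of $\beta^n q^n$ (with $\beta$ independent of $q$), so $\beta^n q^n(1-\alpha)^k$ is not small for large $q$. The paper sidesteps this by bounding $\mathbb{P}(W_k=V)\le\mathbb{P}(X\in V)^{n-k}$ via a conditional-independence lemma and counting semi-saturated \emph{subspaces} $V$ rather than vectors $v$, so the $q$-powers cancel against $q^{-k(n-k)}$.

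The most substantial omission is the treatment of \emph{unsaturated} subspaces: those $V$ that are not sparse but satisfy $\abs{\mathbb{P}(X\in V)-q^{-k}}>Dq^{-k}$. In this range the inverse-theorem enumeration gives nothing (the discrepancy can be far larger than $e^{-d\alpha n}$), yet the subspaces are too numerous to dismiss by counting alone. The paper handles this with a separate mechanism, the swapping method: it builds an auxiliary $\beta$-dense law $\nu$ with $\hat\nu=1-\gamma+\gamma\abs{\hat\mu}^2\ge0$, shows via Kneser that the level sets of $f(t)=\prod_\ell\abs{\hat\mu(w_\ell t)}$ sum into those of $g(t)=\prod_\ell\hat\nu(w_\ell t)$, deduces $\abs{\mathbb{P}(X\in V)-q^{-k}}\le(\tfrac12+o(1))\abs{\mathbb{P}(Y\in V)-q^{-k}}$, and finally replaces $r+s$ of the actual columns by auxiliary ones and sums over $V$ using disjointness of the spanning events to obtain a $2^{-r}$ gain. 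Nothing in your sketch plays this role, and without it there is a gap between the subspaces you can enumerate (sparse, semi-saturated) and those whose error you can estimate directly (saturated).
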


The first result in this direction is due to Charlap, Rees, and Robbins \cite{CRR90}, who showed that the singularity probability $\mathbb{P}(A \text{ is singular})$ converges to the limiting value,
\[
  \prod_{k=1}^n (1 - q^{-k}) = \frac{\abs{\text{GL}(n,\mathbb{F}_q)}}{\abs{\text{M}(n,\mathbb{F}_q)}}
\]
which is the density of invertible matrices in the set of all $n \times n$ matrices over $\mathbb{F}_q$. Their method was based on M\"obius inversion and did not give an effective rate on the convergence.

The dependence on $\alpha$ in Theorem~\ref{thm:main} is optimum. In fact, if we take $\mu$ such that $\mathbb{P}(\xi = 0) > 1 - c \frac{\log n}{n}$ for $c$ suitably small, then we expect $A$ to have a positive proportion of zero columns.

The techniques used to prove Theorem~\ref{thm:main} also naturally allow us to control the distribution of the determinant. As an example, we have the following result.
\begin{theorem} \label{thm:probdistribution}
    For all non-zero $t \in \mathbb{F}_q$, we have the formula
\[
  \mathbb{P}(\det A = t) = q^{-1} \prod_{k = 2}^\infty (1 - q^{-k}) + O(e^{-c \alpha n}).
\]
where $c > 0$ and the implied constant are absolute.
\end{theorem}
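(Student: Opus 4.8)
The plan is to condition on the span of the first $n-1$ rows and then analyze the distribution of the determinant as a function of the last row. Concretely, write $A$ in terms of its rows $v_1,\dots,v_n$, let $V = \spn(v_1,\dots,v_{n-1})$, and note that $\det A = 0$ precisely when $\dim V < n-1$ or $v_n \in V$. The reasoning behind Theorem~\ref{thm:main} should already give, with the stated exponential error, that with overwhelming probability $v_1,\dots,v_{n-1}$ are independent, so that $V$ is a hyperplane; so we may condition on this event. Given such a hyperplane $V$, there is a nonzero linear functional $\phi$ (unique up to scalar) vanishing on $V$, and $\det A = \lambda \cdot \phi(v_n)$ for a scalar $\lambda = \lambda(v_1,\dots,v_{n-1}) \in \mathbb{F}_q^\times$ determined by the first $n-1$ rows (essentially a cofactor). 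Thus for fixed nonzero $t$, $\det A = t$ iff $\phi(v_n) = \lambda^{-1} t$, and the heart of the matter is to show that $\phi(v_n)$ is close to uniform on $\mathbb{F}_q$ — more precisely, that each nonzero value is hit with probability $q^{-1} + O(e^{-c\alpha n})$, uniformly over the conditioning.

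The key input here is an anticoncentration (inverse Littlewood–Offord) estimate over $\mathbb{F}_q$: if $\phi = (a_1,\dots,a_n)$ is a nonzero vector and $v_n$ has iid $\alpha$-dense coordinates, then $\sup_{r} \mathbb{P}(\sum_i a_i \xi_i = r)$ is small unless $\phi$ is ``structured'' (its support is concentrated, or its nonzero entries lie in a small coset structure). Since $\phi$ is the normal vector to the random hyperplane $V$, one expects such structured $\phi$ to occur only with exponentially small probability — this is exactly the type of statement that underlies the proof of Theorem~\ref{thm:main}, where one shows a random hyperplane rarely has a structured normal vector. Granting a dichotomy of the form ``either $\phi$ is unstructured, in which case $\phi(v_n)$ is within $e^{-c\alpha n}$ of uniform in total variation, or $\phi$ is structured, which has probability $\le e^{-c\alpha n}$,'' we get $\mathbb{P}(\phi(v_n) = s \mid V) = q^{-1} + O(e^{-c\alpha n})$ for every $s$, uniformly. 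Averaging over $V$ (and over the cofactor scalar $\lambda$, which is a deterministic function of the conditioning) then yields $\mathbb{P}(\det A = t) = \mathbb{P}(v_1,\dots,v_{n-1}\text{ independent})\cdot(q^{-1} + O(e^{-c\alpha n}))$ for each nonzero $t$.

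To finish, observe that $\sum_{t \ne 0}\mathbb{P}(\det A = t) = \mathbb{P}(\det A \ne 0) = \prod_{k=1}^\infty(1-q^{-k}) + O(e^{-c\alpha n})$ by Theorem~\ref{thm:main}. If all $q-1$ nonzero values were hit with \emph{exactly} equal probability this would give $q^{-1}\cdot\frac{q}{q-1}\prod_{k=1}^\infty(1-q^{-k})$ per value; one checks the identity $\frac{q}{q-1}\prod_{k=1}^\infty(1-q^{-k}) = \prod_{k=2}^\infty(1-q^{-k})$ since $\frac{q}{q-1} = \frac{1}{1-q^{-1}}$ cancels the $k=1$ factor. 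Thus the main term is forced to be $q^{-1}\prod_{k=2}^\infty(1-q^{-k})$, and the per-value error is absorbed into $O(e^{-c\alpha n})$. The main obstacle is the anticoncentration/structure dichotomy for the normal vector $\phi$: one must show both that an unstructured functional equidistributes a sum of iid $\alpha$-dense variables up to exponentially small error (a Fourier-analytic computation over $\mathbb{F}_q$, bounding $\sum_{\psi \ne 0}\prod_i \abs{\widehat{\mu}(a_i\psi)}$), and that structured normal vectors are exponentially rare for a random hyperplane — the latter being the combinatorial core, handled by a union bound over the (exponentially many but controlled) structured configurations together with the observation that each such configuration imposes enough linear constraints to be unlikely. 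I would expect this structural step to reuse, essentially verbatim, the net/covering argument developed for Theorem~\ref{thm:main}.
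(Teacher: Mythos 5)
Your approach matches the paper's: condition on the span $W$ of all but one column (you use rows, but this is symmetric), express $\det A$ as a dot product of the remaining vector with the cofactor vector $w \perp W$ (the paper picks $w$ as the adjugate row, which absorbs your scalar $\lambda$), and apply the Littlewood--Offord dichotomy (Lemma~\ref{lem:littlewoodofford}, Proposition~\ref{prop:inversetheorem}) exactly as in Theorem~\ref{thm:main} to conclude that, outside an exponentially rare event, $\mathbb{P}(X_1 \cdot w = t) = q^{-1} + O(e^{-c\alpha n})$ for every $t$. Your closing step of summing over $t \ne 0$ and appealing to Theorem~\ref{thm:main} to identify the constant is a valid consistency check but is a detour --- the paper instead gets $\mathbb{P}(\codim W = 1) = \prod_{k=2}^\infty(1-q^{-k}) + O(e^{-c\alpha n})$ directly from Proposition~\ref{prop:uniformsubspaces}, which also sidesteps the factor of $q-1$ your averaging would introduce into the error term (and hence a $q$-dependence the paper's ``absolute implied constant'' wants to avoid).
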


The analysis of the singularity of random matrices with continuous distributions over $\C$ is trivial. Indeed, if the last $n-1$ columns of an $n \times n$ matrix are linearly independent, they span a hypersurface in $\C^n$.  Hypersurfaces have Lebesgue measure zero, so as long as the probability distribution of $X$ is absolutely continuous the matrix is almost surely non-singular. In contrast, if the law of the entries takes any value with a positive probability, then the matrix can be singular with positive probability. For example, if the distribution assumes the value $1$ with probability $\delta > 0$, then the probability that the first two columns are identical (and thus the matrix is singular) is bounded below by $\delta^{2n}$.

The first upper bound on the probability that an iid random matrix over $\C$ with non-continuous distribution is singular was given by Kom\'os \cite{Kom67, Kom68}, first for the Bernoulli distribution and later generalized to other laws. An analysis of the argument shows that he in fact gave an upper bound of $O(n^{-\frac12})$ for the probability that the matrix is singular. This was improved to an exponential rate by Kahn, Koml\'os, and Szemer\'edi in \cite{KKS95}, who gave an exponential rate $O(e^{-c n})$ using a hypergraph argument. Subsequent work by Tao and Vu \cite{TV06b, TV07} and Bourgain, Vu, and Wood \cite{BVW10} have improved estimates for the exponential rates close to the conjectured value.

Progress on this problem has come from new estimates for the Littlewood-Offord problem. The classical Littlewood-Offord problem asks, given a fixed vector $a \in \R^n$, for the proportion of signed sums $\pm a_1 \pm \cdots \pm a_n$ lying in an interval in $\R$. This question first arose in the work of Littlewood and Offord \cite{LO43} on the real zeros of a random polynomial. Their estimate, based on a dyadic pigeonhole principle, was improved by Erd\H{o}s \cite{Erd45} to a sharp bound. Later progress on this problem showed that there is a correspondence between upper bounds for this quantity and additive structure among the coefficients of the vector $a$.

In this paper we prove three Littlewood-Offord theorems for finite fields.  We have replaced the random sums from the classical problem with sums $w \cdot X$, where $w \in \mathbb{F}_q^n$ is a fixed vector and $X$ is random with iid entries taken from an $\alpha$-dense probability distribution.  The techniques we employ go back to Hal\'asz in \cite{Hal77}, who bounded the probability $\mathbb{P}(X \cdot w = 0)$  by finding additive structure in the level sets of the Fourier transform of $1_{X \cdot w = 0}$.  We also crucially rely on arguments developed in \cite{KKS95} and \cite{TV06b}.

The key new advance to study matrices over finite fields is an inverse theorem for random sums $w \cdot X$ which are almost uniformly distributed, but differ from the uniform distribution by an exponentially small quantity. In this setting we show that the coefficients of $w$ must lie in a small subset $R \subseteq \mathbb{F}_q^n$. Because the sums are almost uniformly distributed, it is possible to enumerate all such vectors.

The requirement that $\mu$ be an $\alpha$-dense probability distribution can be weakened. As discussed in \cite{CRR90}, let $\theta \in \mathbb{F}_8$ be a primitive element and consider the uniform distribution $\mu$ on $\{0, 1, \theta, 1+\theta\}$. With the methods from this article is easy to show that we recover the expected singularity probability $\prod_{k=1}^\infty (1 - 8^{-k}) + O(e^{-cn})$ while $\mu$ is supported on an additive subgroup of $\mathbb{F}_2^3$. This was first done (but without the exponential rate) by Kahn and Koml\'os in \cite{KK01}, where they showed that it suffices to assume that $\mu$ does not concentrate on affine \emph{subfields}; i.e. subsets of the form $\beta \mathbb{F}_{p^d} + \gamma$ for $d \mid f$ and $\beta, \gamma \in \mathbb{F}_q$.

However, with this weaker condition we can construct examples that do not have an exponentially small error term. For example, let $f$ be a large prime, $\theta \in \mathbb{F}_{p^f}$ a primitive element and $\mu$ uniformly distributed on $\{0,1,\theta,1 + \theta\}$. After expanding the determinant we see that $\det A$ can only take values in the additive subgroup $\langle 1, \theta, \theta^2, ..., \theta^n \rangle$.

\section{Reduction of Theorem~\ref{thm:main} to a universality statement} \label{sec:maintheorem}

Let $X_1, ..., X_n$ denote the columns of $A$.  For convenience we will let $X \in \mathbb{F}_q^n$ denote an independent random vector with iid entries distributed according to $\mu$; thus each $X_\ell$ is an iid copy of $X$ for $1 \leq \ell \leq n$.

We expose each column $X_k$ in turn, from $X_n$ to $X_1$, and check whether it lies in the span of the previously exposed columns.  Let $W_k := \langle X_{k+1}, ..., X_n \rangle$ denote the span of the final $n - k$ columns of $A$. By conditional expectation,
\[
  \mathbb{P}(A \text{ is non-singular}) = \prod_{k=1}^n \mathbb{P}(X_k \notin W_k \mid \codim W_k = k)
\]

Suppose that $V$ is a deterministic subspace of $\mathbb{F}_q^n$ of codimension $k$.  If $X_k$ were chosen \emph{uniformly} from $\mathbb{F}_q^n$ then we would have $\mathbb{P}(X_k \notin V) = 1 - q^{-k}$ and the theorem would follow.  In fact, we can show that for sufficiently small $k$ this equality holds with exponentially small error.
\begin{proposition}\label{prop:uniformsubspaces}
    There is an absolute constant $\eta > 0$ such that, for all $1 \leq k \leq \eta n$, we have the estimate
\[
  \mathbb{P}(X_k \in W_k \mid \codim W_k = k) = q^{-k} + O(e^{-c\alpha n})
\]
where the implied constant and $c > 0$ are absolute.
\end{proposition}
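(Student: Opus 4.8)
The plan is to estimate $\mathbb{P}(X_k \in W_k \mid \codim W_k = k)$ by comparing the distribution of $X_k$ against the uniform distribution on $\mathbb{F}_q^n$ through a Fourier-analytic argument. The quantity we want equals $\sum_{w \in W_k^\perp} \widehat{\mu}(w)^{\,?}$ --- more precisely, conditioning on the realization of $W_k$, we have $\mathbb{P}(X_k \in W_k \mid W_k) = \frac{1}{|W_k^\perp|}\sum_{w \in W_k^\perp} \prod_{i=1}^n \widehat{\mu}(w_i)$ where $\widehat{\mu}(\cdot)$ is the Fourier transform of $\mu$ over the additive group $\mathbb{F}_q$. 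The $w = 0$ term contributes exactly $|W_k^\perp|^{-1} = q^{-k}$, which is the main term. So the task reduces to showing that the contribution of all nonzero $w \in W_k^\perp$ is $O(e^{-c\alpha n})$ uniformly over the relevant event. For a single fixed nonzero $w$, the $\alpha$-density hypothesis gives $|\widehat{\mu}(w_i)| \le 1 - c'\alpha$ for each coordinate $i$ with $w_i \ne 0$ (using that $\mu$ does not concentrate on a coset of any proper subgroup, in particular the kernel of the additive character determined by $w_i$), so a single term is exponentially small as soon as $w$ has $\Omega(n)$ nonzero coordinates. The difficulty is (a) handling $w$ with few nonzero coordinates (sparse $w$), and (b) summing over potentially $q^k$ many $w$ in $W_k^\perp$, which for $k$ close to $\eta n$ is itself exponential.

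To deal with (a), I would condition on the structure of $W_k$: if $W_k^\perp$ contains a nonzero vector $w$ supported on a small set $S$ of coordinates with $|S| = O(\alpha n / \log q)$ or so, that is itself a low-probability event, because it forces the $|S|$ corresponding rows of the exposed columns $X_{k+1}, \dots, X_n$ to satisfy a linear relation; this can be controlled by a union bound over small supports together with a crude Littlewood–Offord-type estimate for the probability that a fixed small linear combination of iid $\alpha$-dense entries vanishes. Alternatively --- and this is likely the cleaner route --- one works with the event that $W_k$ is "non-degenerate" in the sense that $W_k^\perp$ has no sparse nonzero vectors, shows this event has probability $1 - O(e^{-c\alpha n})$ by exposing columns and using the density hypothesis, and then on the complementary good event every $w \in W_k^\perp \setminus \{0\}$ has $\ge \delta n$ nonzero coordinates.

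For (b), the bound $|\widehat{\mu}(w_i)| \le 1 - c'\alpha$ per nonzero coordinate gives each term $\le (1 - c'\alpha)^{\delta n} \le e^{-c'' \alpha n}$, but there are up to $q^k - 1 \le q^{\eta n}$ terms. The resolution is to choose $\eta$ small enough that $q^{\eta n} e^{-c'' \alpha n}$ is still exponentially small; since $c''\alpha$ may be tiny, this does not literally work with a fixed $\eta$ independent of $\alpha$ unless we are more careful. The honest fix is a more refined count: group the $w \in W_k^\perp$ by their number of nonzero coordinates $m$, note there are at most $\binom{n}{m} q^m$ vectors in $\mathbb{F}_q^n$ with exactly $m$ nonzero entries but only those lying in the $k$-dimensional space $W_k^\perp$ count, and bound the sum by $\sum_{m \ge \delta n} (\text{number of such } w \text{ in } W_k^\perp)\,(1-c'\alpha)^m$; one then invokes that $W_k^\perp$, being $k$-dimensional with $k \le \eta n$ and having no sparse vectors, cannot contain too many vectors of each weight. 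This last combinatorial input is the main obstacle: controlling the weight distribution of a random (or worst-case structured) subspace arising as the orthogonal complement of the column span. I expect this is exactly where the Littlewood–Offord theorems advertised in the introduction for finite fields are deployed --- specifically, an inverse statement saying that if $\mathbb{P}(w \cdot X = 0)$ is close to $q^{-1}$ then $w$ is "spread out," which feeds back into showing $W_k^\perp$ is generated by spread-out vectors with overwhelming probability --- so I would structure the proof to reduce Proposition~\ref{prop:uniformsubspaces} to those Littlewood–Offord estimates plus the elementary Fourier computation above.
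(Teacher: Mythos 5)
Your Fourier expansion
\[
  \mathbb{P}(X \in V) = q^{-k}\sum_{w \in V^\perp} \prod_{i=1}^n \widehat{\mu}(w_i)
\]
is the right starting point and correctly isolates the main term from $w=0$, but the key estimate you invoke --- that $\alpha$-density gives $\abs{\widehat{\mu}(w_i)} \leq 1 - c'\alpha$ for each nonzero $w_i$ with $c'$ absolute --- is false once $q$ is large. For instance $\mu$ uniform on $\{0,1\}\subseteq\mathbb{F}_p$ is $\tfrac12$-dense, yet $\abs{\widehat{\mu}(t)} = \abs{\cos(\pi\Tr(t)/p)} = 1 - O(p^{-2})$ when $\Tr(t)=1$. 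What $\alpha$-density actually controls is only the \emph{average} of $\abs{\widehat{\mu}}^2$ over a nontrivial additive subgroup $H$, namely $\abs{H}^{-1}\sum_{t\in H}\abs{\widehat{\mu}(t)}^2 \leq 1-\alpha$ (the identity used inside the paper's proof of Lemma~\ref{lem:littlewoodofford}); converting that into a pointwise gain is exactly the Hal\'asz/Kneser machinery of Section~\ref{sec:lo}, and even then, for a \emph{fixed} non-sparse $V$ the bound it gives is only polynomial, $\abs{\mathbb{P}(X\cdot w = 0)-q^{-1}} \lesssim (\alpha m)^{-1/2}$ for $w$ with $m$ nonzero coordinates. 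This is not an artifact: with $\mu$ Bernoulli over $\mathbb{F}_p$ and $p^2 \gg n$, the hyperplane orthogonal to $(1,\dots,1)$ has $\abs{\mathbb{P}(X\in V)-q^{-1}} = \Theta(n^{-1/2})$, so non-sparse $V$ with non-exponentially-small deviation genuinely exist, and conditioning on $W_k^\perp$ having no sparse vectors cannot suffice. (Incidentally, your concern (b) about summing $q^k-1$ terms is moot: the $q^{-k}$ prefactor already absorbs the count, giving $\abs{\mathbb{P}(X\in V)-q^{-k}} \leq \max_{0\neq w\in V^\perp}\prod_i\abs{\widehat{\mu}(w_i)}$ with no $\eta$-constraint and no weight-distribution analysis required.)

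The paper's proof is structured quite differently around exactly this obstacle. Non-sparse $V$ are further classified as unsaturated, semi-saturated, or saturated according to the magnitude of $\abs{\mathbb{P}(X\in V)-q^{-k}}$, and the content of Proposition~\ref{prop:uniformsubspaces} is that $W_k$ is \emph{saturated} --- meaning $\abs{\mathbb{P}(X\in V)-q^{-k}}\leq e^{-d\alpha n}$ --- with probability $1-O(e^{-c\alpha n})$, after which the estimate is immediate. Ruling out the other cases uses two ingredients absent from your proposal: for unsaturated $V$, a swapping argument (Lemma~\ref{lem:replacement}) constructing an auxiliary $\beta$-dense $\nu$ with $\abs{\mathbb{P}(X\in V)-q^{-k}} \leq (\tfrac12+o(1))\abs{\mathbb{P}(Y\in V)-q^{-k}}$, which yields an exponential gain once iterated over $\Theta(n)$ swapped columns; and for semi-saturated $V$, an inverse theorem (Proposition~\ref{prop:inversetheorem}) forcing $V^\perp$ to meet a set $R$ of size $\leq\beta^n q^n$, enabling a union bound over such subspaces. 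The Littlewood--Offord input thus serves to bound the probability that the \emph{random} subspace $W_k$ realizes a bad $V$ at all, not to bound $\mathbb{P}(X\in V)$ for a worst-case fixed non-sparse $V$ --- which, as above, cannot give exponential decay.
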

For columns $X_k$ with $k > \eta n$ we have $q^{-k} = O(e^{-cn})$, so it suffices to show that
\[
  \mathbb{P}(X_k \in W_k \mid \codim W_k = k) = O(e^{-c\alpha n}).
\]
This is guaranteed by the following lemma, first recorded in \cite{Odl88}.
\begin{lemma}[Odlyzko] \label{lem:odlyzko}
For any fixed subspace $V$ of $\mathbb{F}_q^n$ and random vector $X \in \mathbb{F}_q^n$ that is $\alpha$-dense, we have the bound
\[
  \mathbb{P}(X \in V) \leq (1-\alpha)^{\codim V}.
\]
\end{lemma}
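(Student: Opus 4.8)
The plan is to exploit the linear-algebraic structure of $V$ by choosing its defining equations in reduced row echelon form, and then to reveal the coordinates of $X$ in an order that turns the event $\{X \in V\}$ into a product of $\codim V$ conditionally independent events, each of probability at most $1 - \alpha$. The only property of $\alpha$-density we will use is the special case of the defining inequality with the trivial subgroup $T = \{0\}$, namely that $\mathbb{P}(\xi = s) \le 1 - \alpha$ for every fixed $s \in \mathbb{F}_q$.

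First I would set $d = \codim V$ and choose linear functionals $\phi_1, \dots, \phi_d$ on $\mathbb{F}_q^n$ whose common kernel is $V$. By Gaussian elimination these may be taken in reduced row echelon form, so that there are pivot coordinates $c_1 < c_2 < \cdots < c_d$ with $\phi_i(e_{c_j}) = \delta_{ij}$, where $e_1, \dots, e_n$ is the standard basis. Writing $S = \{1, \dots, n\} \setminus \{c_1, \dots, c_d\}$ for the set of non-pivot coordinates, the condition $X \in V$ becomes the system $X_{c_i} = -\sum_{\ell \in S} a_{i\ell} X_\ell$ for $i = 1, \dots, d$, where the $a_{i\ell} \in \mathbb{F}_q$ are the fixed coefficients of $\phi_i$ on the non-pivot coordinates.

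Next I would condition on the restriction $X|_S$ of $X$ to the non-pivot coordinates. Conditionally on $X|_S$, each right-hand side $-\sum_{\ell \in S} a_{i\ell} X_\ell$ is a fixed element $s_i \in \mathbb{F}_q$, so $\{X \in V\}$ is the intersection of the events $\{X_{c_i} = s_i\}$; these events involve the pairwise distinct coordinates $X_{c_1}, \dots, X_{c_d}$ and are therefore conditionally independent given $X|_S$. The $\alpha$-density hypothesis (with $T = \{0\}$) gives $\mathbb{P}(X_{c_i} = s_i \mid X|_S) \le 1 - \alpha$ for each $i$, hence $\mathbb{P}(X \in V \mid X|_S) \le (1 - \alpha)^d$, and taking expectations over $X|_S$ finishes the proof.

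There is no substantial obstacle here; the one point requiring care is that the defining functionals must be chosen in echelon form, rather than as an arbitrary spanning set of the annihilator of $V$, so that the $d$ forced values $s_i$ are measurable with respect to the coordinates $X|_S$ on which we condition and so that the $d$ constrained coordinates are genuinely distinct. Everything else reduces to the elementary fact that an $\alpha$-dense variable takes any prescribed single value with probability at most $1 - \alpha$.
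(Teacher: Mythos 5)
Your proof is correct and follows essentially the same approach as the paper's: the paper phrases it as finding $n - k$ coordinates over which $V$ is a graph and conditioning on those, which is exactly the non-pivot set $S$ produced by your reduced row echelon form, with the pivot coordinates then uniquely determined and each forced value hit with probability at most $1-\alpha$. Your version just makes the graph structure and the conditional independence explicit.
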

\begin{proof}[Proof of Lemma~\ref{lem:odlyzko}]
Let $k$ denote the codimension of $V$.  We can find $n - k$ coordinates $\tau \subseteq [n]$ such that $V$ is a graph over $\tau$.  If we condition on the coordinates of $X$ in $\tau$, then there is a unique choice for the remaining coordinates $[n] \setminus \tau$ for $X \in V$.  Since $\mu$ is $\alpha$-dense, the probability that each entry of $X$ assumes the required value is bounded by $1 - \alpha$, and the result follows from the independence of the entries.
\end{proof}

We will now prove Proposition~\ref{prop:uniformsubspaces}.  It is convenient to distinguish four kinds of subspaces that $W_k$ can represent as $X_{k+1}, ..., X_n$ vary. Fix absolute constants $\delta$, $d$, and $D$; for intuition we can take $\delta = 1/100$, $d = 1/100$, and $D = 10$, but we do not compute exact values.

Let $V$ be a \emph{fixed} codimension $k$ subspace of $\mathbb{F}_q^n$. Then we say that $V$ is \emph{sparse}, \emph{unsaturated}, \emph{semi-saturated}, or \emph{saturated} as follows.
\begin{description}
    \item[sparse]  There is a non-zero $w \perp V$ with $\abs{\supp w} \leq \delta n$.  We can directly count these subspaces.
    \item[unsaturated] $V$ is not sparse and we have the estimate 
    \[\max(e^{-d\alpha n}, D q^{-k}) < \abs{\mathbb{P}(X \in V) - q^{-k}}.\]
    We adapt the swapping method from \cite{TV06b} and construct a random vector $Y$ such that $\mathbb{P}(X \in V) \leq (\frac12 + \frac1D + o(1)) \mathbb{P}(Y \in V)$.
    \item[semi-saturated] $V$ is not sparse and we have the estimates 
    \[e^{-d\alpha n} < \abs{\mathbb{P}(X_k \in V) - q^{-k}} \leq D q^{-k}.\]
    In this range the swapping method does not yield a useful gain; however, we can enumerate semi-saturated $V$ by finding a structured $w \perp V$. Note that for $q$  sufficiently large there are no semi-saturated spaces.
    \item[saturated] $V$ is not sparse and we have the estimate 
    \[\abs{\mathbb{P}(X_k \in V) - q^{-k}} \leq e^{-d\alpha n}.\]
\end{description}

Proposition~\ref{prop:uniformsubspaces} will follow if we can show that $W_k$ represents a saturated subspace with probability $1 - O(e^{-c\alpha n})$ with absolute constants.  It therefore suffices to show that $W_k$ is sparse, semi-saturated, or unsaturated with probability $O(e^{-c\alpha n})$.

%%%%%%%%%%%%%%%%%%%%%%%%%%%%%%%%%%%%%%%%%%%%%%%%%%%%%%%%%%%%%%%%%%%%%%%%%%%%%%%
\subsection{Sparse subspaces} \label{sec:sparsesubspaces}

We adapt the counting method from \cite{KKS95}.  If $W_k$ is sparse, then we can find a non-zero $w \perp W_k$ with $\abs{\supp w} \leq \delta n$.  By the union bound,
\[
  \mathbb{P}(W_k \text{ is sparse}) \leq \sum_{\substack{\sigma \subseteq[n] \\ 1 \leq \abs{\sigma} \leq \delta n}} \mathbb{P}(W_k \perp w \text{ for some } w \text{ with } \supp w = \sigma).
\]
Fix $\sigma$. It suffices to bound
\[
  Q_\sigma := \mathbb{P}(W_k \perp w \text{ for some } w \text{ with } \supp w = \sigma) \leq O(e^{-c\alpha n})
\]
with the implied constant and $c > 0$ depending on $\delta$. We will choose $\delta$ in the proofs for unsaturated and semi-saturated subspaces.

If we have such a perpendicular vector $w$ we can write the matrix equation
\[
  w^t \begin{bmatrix} X_{\ell + 1} & \cdots & X_n \end{bmatrix} = 0.
\]
Restricting the product to indices in $\sigma$ and denoting this reduction by $\widetilde{\cdot}$,
\[
  \widetilde{w}^t \begin{bmatrix} \widetilde{X}_{\ell + 1} & \cdots & \widetilde{X}_n \end{bmatrix} = 0.
\]
The matrix of reduced columns has size $\abs{\sigma} \times (n - \ell)$ and has rank less than $\abs{\sigma}$, so we conclude that the dimension of the column space is at most $\abs{\sigma}-1$.  There are at most $\binom{n-\ell}{\abs{\sigma}-1}$ possible choices for a set $\tau$ of spanning columns; we do not require that they be linearly independent.  Regardless of the choice of $\tau$, the remaining columns must be perpendicular to $\widetilde{w}$.  Collecting these bounds, we find
\[
  Q_\sigma \leq \sum_{\substack{\tau \subset [n] \\ \abs{\tau} = \abs{\sigma} - 1}}  \sup_{\supp w = \sigma} \mathbb{P}(X_t \perp w \text{ for all } t \notin \tau \mid \codim W_k = k)
\]
We expect linearly independent vectors to be less likely to lie in a given subspace than average.  The next proposition verifies that intuition.
\begin{proposition} \label{prop:independencebound}
    Let $Z_1, ..., Z_r$ be non-trivial iid random vectors in $\mathbb{F}_q^n$.  Then we have the bound
    \[
      \mathbb{P}(Z_1, \ldots, Z_r \in V \mid Z_1, \ldots, Z_r \text{ are linearly independent}) \leq \mathbb{P}(Z \in V)^r.
    \]
\end{proposition}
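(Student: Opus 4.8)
The plan is to condition on the event that $Z_1,\dots,Z_r$ are linearly independent and then expose the vectors one at a time, showing at each step that the conditional probability of landing in $V$ is dominated by $\mathbb{P}(Z\in V)$. Fix an ordering and expose $Z_1$ first, then $Z_2$, and so on. After exposing $Z_1,\dots,Z_{j-1}$ (and conditioning on linear independence of the whole tuple), the vector $Z_j$ is distributed as an iid copy of $Z$ conditioned to lie outside $\langle Z_1,\dots,Z_{j-1}\rangle$ — this is where the linear-independence hypothesis is used, and it is genuinely helpful: we have thrown away some probability mass, namely the mass of $Z$ on the span of the earlier vectors.

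The key inequality is then the following: for any subspace $V$ and any subspace $U$ (here $U=\langle Z_1,\dots,Z_{j-1}\rangle$, a proper subspace of $V$ on the conditioned event, so $V\not\subseteq U$),
\[
  \mathbb{P}(Z\in V \mid Z\notin U) \le \mathbb{P}(Z\in V).
\]
This follows from the elementary fact $\mathbb{P}(B\mid A)\le \mathbb{P}(B)$ whenever $\mathbb{P}(B\setminus A)\le \mathbb{P}(B)\,\mathbb{P}(A^c)/\mathbb{P}(A)$, which rearranges to $\mathbb{P}(B\cap A)\ge \mathbb{P}(B)\mathbb{P}(A)$... so I should instead argue directly. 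Write $\mathbb{P}(Z\in V\mid Z\notin U)=\mathbb{P}(Z\in V\setminus U)/\mathbb{P}(Z\notin U)$. Since $V\not\subseteq U$ we have $V\cap U$ is a proper subspace of $V$; set $p=\mathbb{P}(Z\in V)$, $a=\mathbb{P}(Z\in V\cap U)$, $b=\mathbb{P}(Z\in U)$, so the quantity equals $(p-a)/(1-b)$, and we want $(p-a)/(1-b)\le p$, i.e. $p-a\le p-pb$, i.e. $pb\le a$. This is exactly the statement $\mathbb{P}(Z\in V)\,\mathbb{P}(Z\in U)\le \mathbb{P}(Z\in V\cap U)$, a positive-correlation inequality for the events $\{Z\in V\}$ and $\{Z\in U\}$ under the product measure $\mu^{\otimes n}$.

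So the main obstacle is establishing this correlation inequality: under the product distribution, membership in two subspaces are positively correlated events. The clean way to see it is to pass to coordinates adapted to $U+V$. Choose a basis of $\mathbb{F}_q^n$ so that $U$ is defined by vanishing of a set $S_U$ of linear coordinate-forms, $V$ by vanishing of $S_V$, chosen so that $S_U\cap S_V$ corresponds to $U+V$ and the forms involved are "triangular" with respect to a coordinate decomposition — concretely, one shows $\{Z\in U\}$ and $\{Z\in V\}$ can each be written as an event depending on $Z$ only through a linear image, and after a change of variables (invertible, hence measure-preserving structure is lost) — but $\mu$ need not be uniform, so changes of basis are not allowed. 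The honest route: since $U,V$ are subspaces there exist disjoint coordinate sets over which each is a graph; more carefully, using that $U\cap V\subseteq U$ and $U\cap V\subseteq V$, one builds the inequality by exposing coordinates in three groups (those "free" for both, those determined by the first constraint, those determined by the second) and applying independence of entries. I expect this bookkeeping — reducing the geometric statement $\mathbb{P}(U)\mathbb{P}(V)\le\mathbb{P}(U\cap V)$ to a coordinate computation using only independence of the entries of $Z$ (no assumption on $\mu$ beyond being a product) — to be the crux; everything else is the telescoping product $\prod_{j=1}^r \mathbb{P}(Z\in V\mid Z\notin \langle Z_1,\dots,Z_{j-1}\rangle)\le \mathbb{P}(Z\in V)^r$.
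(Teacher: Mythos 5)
Your framework is correct up to the point where you write \(U=\langle Z_1,\dots,Z_{j-1}\rangle\), ``a proper subspace of \(V\) on the conditioned event'' — but then you do not use this containment. Because the exposed vectors \(Z_1,\dots,Z_{j-1}\) have already been forced into \(V\), you have \(U\subseteq V\), hence \(V\cap U=U\) and \(a=b\) in your notation. The inequality \(pb\le a\) is then just \(pb\le b\), i.e.\ \(p\le1\), which is trivial. There is no positive-correlation inequality to prove, and the coordinate bookkeeping you flag as ``the crux'' is not needed. This is in fact the paper's argument: writing \(\mathbb{P}(Z\in V\setminus U)=\mathbb{P}(Z\in U)\mathbb{P}(Z\in V\setminus U)+\mathbb{P}(Z\notin U)\mathbb{P}(Z\in V\setminus U)\), using \(\mathbb{P}(Z\in V\setminus U)\le\mathbb{P}(Z\notin U)\) in the first term, and then \(\mathbb{P}(Z\in U)+\mathbb{P}(Z\in V\setminus U)=\mathbb{P}(Z\in V)\) (which is exactly where \(U\subseteq V\) enters).

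The gap is not merely cosmetic: the general correlation inequality \(\mathbb{P}(Z\in V)\,\mathbb{P}(Z\in U)\le\mathbb{P}(Z\in V\cap U)\) that you try to establish for arbitrary subspaces \(U,V\) under a product measure is \emph{false}. Take \(q=2\), \(n=2\), \(\mu(0)=0.1\), \(\mu(1)=0.9\), \(V=\{x_1=0\}\), \(U=\{x_1=x_2\}\). Then \(\mathbb{P}(Z\in V)=0.1\), \(\mathbb{P}(Z\in U)=0.82\), \(\mathbb{P}(Z\in V\cap U)=0.01\), and \(0.1\cdot 0.82=0.082>0.01\). So the route you propose cannot be completed; the containment \(U\subseteq V\), which you observed and then discarded, is essential.
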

\begin{proof}
Expanding the left hand side with conditional expectation,
\[
  \prod_{j=1}^r \mathbb{P}(Z_j \in V \mid Z_1, \ldots, Z_{j-1} \in V \text{ and } Z_1, ..., Z_j \text{ are linearly independent})
\]
Let $U := \langle Z_1, \ldots, Z_{j-1} \rangle \leq V$ denote the span of the exposed vectors.  It suffices to show that
\[
  \frac{\mathbb{P}(Z \in V \setminus U)}{\mathbb{P}(Z \notin U)} \leq \mathbb{P}(Z \in V).
\]
In fact,
\begin{align*}
    \mathbb{P}(Z \in V \setminus U) &= \mathbb{P}(Z \in U) \mathbb{P}(Z \in V \setminus U) + \mathbb{P}(Z \notin U) \mathbb{P}(Z \in V \setminus U) \\
    &\leq \mathbb{P}(Z \in U) \mathbb{P}(Z \notin U) + \mathbb{P}(Z \in V \setminus U) \mathbb{P}(X \notin U) \\
    &= (\mathbb{P}(Z \in U) + \mathbb{P}(Z \in V \setminus U)) \mathbb{P}(Z \notin U)
\end{align*}
and the proposition follows.
\end{proof}
% This proof was first done by setting up an inclusion R x R cap V \ U --> R cap U^c x R cap V, where R is the range of X. You can deduce the inclusion from the proof above.

Combining terms we get
\[
  \mathbb{P}(W_k \text{ is sparse}) \leq \sum_{\substack{\sigma \subseteq[n] \\ 1 \leq \abs{\sigma} \leq \delta n}} \binom{n-\ell}{\abs{\sigma}-1} \sup_{\supp w = \sigma} \mathbb{P}(X \perp w)^{n-\ell-\abs{\sigma}+1}
\]
It remains to bound $\mathbb{P}(X \perp w)$ for $w$ with support $\sigma$. For this task we can use the following Littlewood-Offord theorem.

\begin{lemma}[Littlewood-Offord] \label{lem:littlewoodofford}
    Let $X \in \mathbb{F}_q^n$ be a random vector with iid entries taken from an $\alpha$-dense probability distribution $\mu$. Suppose $w \in \mathbb{F}_q^n$ has at least $m$ non-zero coefficients.  Then we have the estimate
    \[
      \abss{\mathbb{P}(X \cdot w = r) - \frac1q} \lesssim \frac{1}{\sqrt{\alpha m}}
    \]
    for all $r \in \mathbb{F}_q$, where the implied constant is absolute.
\end{lemma}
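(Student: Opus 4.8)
The plan is to prove the Littlewood–Offord estimate via a Fourier-analytic argument on the additive group $\mathbb{F}_q^+$, following the Halász-type approach mentioned in the introduction. Write $e_q(\cdot)$ for a non-trivial additive character (or more precisely, sum over all characters $\psi$ of $\mathbb{F}_q^+$). By orthogonality of characters,
\[
  \mathbb{P}(X \cdot w = r) - \frac1q = \frac1q \sum_{\psi \neq 1} \overline{\psi(r)}\, \mathbb{E}\,\psi(X \cdot w)
  = \frac1q \sum_{\psi \neq 1} \overline{\psi(r)} \prod_{j : w_j \neq 0} \mathbb{E}\,\psi(w_j \xi_j),
\]
using independence of the entries and the fact that $\psi(0 \cdot \xi_j) = 1$ contributes trivially. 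So it suffices to bound $\prod_{j : w_j \neq 0} \abs{\mathbb{E}\,\psi(w_j \xi)}$ for each non-trivial $\psi$, uniformly, and sum the $q-1$ resulting terms.

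The heart of the matter is a pointwise bound on a single factor: for any non-trivial additive character $\chi$ of $\mathbb{F}_q$ (here $\chi(\cdot) = \psi(w_j \cdot)$, which is non-trivial precisely because $w_j \neq 0$), I claim $\abs{\mathbb{E}\,\chi(\xi)} \leq 1 - c\alpha$ for an absolute $c > 0$. To see this, note $\abs{\mathbb{E}\,\chi(\xi)}^2 = \sum_{a,b} \mathbb{P}(\xi = a)\mathbb{P}(\xi = b)\,\mathrm{Re}\,\chi(a-b) \leq 1 - \sum_{a,b} \mathbb{P}(\xi=a)\mathbb{P}(\xi=b)(1 - \mathrm{Re}\,\chi(a-b))$. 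The kernel of $\chi$ is a proper additive subgroup $T \lhd \mathbb{F}_q$; for $a - b \notin T$ we have $1 - \mathrm{Re}\,\chi(a-b) \geq $ some absolute constant depending only on the structure (in fact $1 - \cos(2\pi/p) \geq c$, since $\chi$ takes values in $p$-th roots of unity). The $\alpha$-density hypothesis applied to the coset $b + T$ gives $\mathbb{P}(\xi \in b + T) \leq 1 - \alpha$, so $\sum_{a \notin b+T}\mathbb{P}(\xi = a) \geq \alpha$ for every $b$; hence the subtracted double sum is $\gtrsim \alpha$, yielding $\abs{\mathbb{E}\,\chi(\xi)}^2 \leq 1 - c\alpha$ and therefore $\abs{\mathbb{E}\,\chi(\xi)} \leq 1 - c'\alpha$.

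With this single-factor bound, each product over the $\geq m$ indices with $w_j \neq 0$ is at most $(1 - c'\alpha)^m \leq e^{-c'\alpha m}$, and summing the $q - 1 < q$ character terms gives $\abs{\mathbb{P}(X\cdot w = r) - 1/q} \leq e^{-c'\alpha m}$. This is in fact stronger than the stated $O(1/\sqrt{\alpha m})$ bound, so the lemma follows a fortiori; alternatively, if one wants exactly the stated form (perhaps to keep uniformity in $q$ clean), one interpolates: the bound $e^{-c'\alpha m}$ combined with the trivial estimate shows $\abs{\mathbb{P}(X \cdot w = r) - 1/q} \lesssim \min(1, e^{-c'\alpha m}) \lesssim (\alpha m)^{-1/2}$.

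The main obstacle I anticipate is the uniformity of the per-character decay constant across all finite fields $\mathbb{F}_q = \mathbb{F}_{p^f}$ as $p$ varies — specifically ensuring that $1 - \mathrm{Re}\,\chi(a-b)$ is bounded below by an \emph{absolute} constant independent of $p$. When $p$ is large, $\chi(a-b)$ can be a primitive $p$-th root of unity very close to $1$, so $1 - \mathrm{Re}\,\chi(a-b)$ can be as small as $\Theta(1/p^2)$ for individual terms. The fix is to not use a single coset $b + T$ but to average cleverly, or — the cleaner route — to use the elementary inequality $\mathbb{E}_{k}\abs{\mathbb{E}\,\chi(k\xi)}^2 \leq 1 - c\alpha$ averaged appropriately, or simply to observe that we only need \emph{some} non-trivial character in the product to be genuinely contracting and there are enough of them; alternatively one appeals to the fact that summing $\abs{\mathbb{E}\chi(\xi)}^2$ over all $\chi$ in a line through the origin recovers a count of collision probability, which the $\alpha$-density bounds below. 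I will package whichever of these is cleanest; the polynomial $1/\sqrt{\alpha m}$ form of the conclusion (rather than exponential) is precisely the safety margin that lets this averaging argument go through without tracking the $p$-dependence too carefully.
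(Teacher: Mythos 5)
Your opening Fourier decomposition is exactly the paper's, and your closing observation --- that one should aim only for $O(1/\sqrt{\alpha m})$ rather than exponential decay, because the constant cannot be tracked uniformly in $p$ --- is the correct intuition. But the central step you rely on, $\abs{\mathbb{E}\,\chi(\xi)} \leq 1 - c\alpha$ with an \emph{absolute} $c > 0$, is simply false, and you have diagnosed the reason yourself: over $\mathbb{F}_p$ with $p$ large, $\mu$ uniform on $\{0,1\}$ (so $\alpha = 1/2$), and $\chi(x) = e^{2\pi i x/p}$, one has $\abs{\mathbb{E}\chi(\xi)} = \cos(\pi/p) = 1 - O(p^{-2}) \to 1$. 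Worse, the exponential conclusion $\abs{\mathbb{P}(X\cdot w = r) - q^{-1}} \leq e^{-c'\alpha m}$ that you derive from it is not merely unproved but false: with the same $\mu$, $w = (1,\dots,1)$, and $p > n = m$, the law of $X\cdot w$ is binomial with parameters $n$ and $1/2$ supported on $n+1$ distinct residues, so $\mathbb{P}(X\cdot w = \lfloor n/2\rfloor) \sim \sqrt{2/(\pi n)}$ while $q^{-1} = p^{-1}$ is negligible. The left side is $\Theta(1/\sqrt{\alpha m})$; the square-root in the lemma is sharp, not a safety margin one reaches by weakening a stronger bound.

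The substantive gap is therefore everything after the triangle inequality. The averaged statement you gesture at --- that $\abs{H}^{-1}\sum_{t \in H}\sum_\ell (1-\abs{\widehat{\mu}(w_\ell t)}^2) \geq \alpha m$ for every nontrivial additive subgroup $H$ --- is correct, but it does not by itself control $\sum_{t\neq 0}\prod_\ell \abs{\widehat{\mu}(w_\ell t)}$, since $f(t) := \sum_\ell(1-\abs{\widehat{\mu}(w_\ell t)}^2)$ could a priori be small on a large set with no subgroup structure, and over $\mathbb{F}_p$ prime the only nontrivial subgroup to average over is all of $\mathbb{F}_p$. The paper's proof supplies exactly the missing machinery (Hal\'asz's method): it proves the sum-set inclusion $T(v) + \cdots + T(v) \subseteq T(k^2 v)$ for the level sets $T(v) := \{t : f(t)\le v\}$ via the cosine inequality $\psi(\beta_1+\cdots+\beta_k)\le k\sum_j\psi(\beta_j)$, then invokes Kneser's theorem to get $k\abs{T(v)} \le \abs{T(k^2 v)} + (k-1)\abs{\Sym(T(v)+\cdots+T(v))}$, uses the subgroup-averaging bound to rule out a nontrivial $\Sym$, concludes $\abs{T'(v)} \lesssim \sqrt{v/(\alpha m)}\,\abs{T'(\alpha m)}$, and finally integrates over level sets. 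Your ``collision probability over a line'' remark is one ingredient of this argument (the $\Sym$-killing step), but the cosine inequality, the sum-set inclusion, and Kneser are the load-bearing parts, and none of the fixes you sketch supply them. That is not a packaging detail to be deferred --- it is the entire content of the lemma.
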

We only require the estimate for $r = 0$. We will prove Lemma~\ref{lem:littlewoodofford} in Section~\ref{sec:lo}.

If we combine this with the trivial inequality $\mathbb{P}(X \perp w) \leq 1 - \alpha$ for small $\abs{\sigma}$, we deduce
\[
  \mathbb{P}(W \text{ is sparse}) \leq O(e^{-c\alpha n})
\]
with absolute constants for $\delta$ sufficiently small.

%%%%%%%%%%%%%%%%%%%%%%%%%%%%%%%%%%%%%%%%%%%%%%%%%%%%%%%%%%%%%%%%%%%%%%%%%%%%%%%
\subsection{Semi-Saturated subspaces}

Let $V$ be a semi-saturated subspace of codimension $k$. We first claim that we can find a non-zero $\xi \perp V$ that is structured in the following sense.
\begin{proposition} \label{prop:inversetheorem}
For all $\beta > 0$ there is a value of $d$ in the definition of semi-saturated and a subset
\[
  R \subseteq \mathbb{F}_q^n, \qquad \abs{R} \leq \beta^n q^n
\]
such that every semi-saturated $V$ is perpendicular to a non-zero $\xi \in R$.
\end{proposition}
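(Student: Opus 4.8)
The plan is to pass to the Fourier side: I would turn the near‑uniformity of $\mathbb{P}(X\in V)$ into the existence of a single ``resonant'' dual vector of $V$, let $R$ be the set of all such resonant vectors, and then bound $\abs{R}$ by a union‑bound / generating‑function estimate. Fix a non‑trivial additive character $\psi$ of $\mathbb{F}_q$ of order $p$, say $\psi(y)=\exp\bigl(2\pi i\,\Tr_{\mathbb{F}_q/\mathbb{F}_p}(y)/p\bigr)$, and write $\widehat{\mu}(a):=\mathbb{E}[\psi(aX_1)]$, so that $\widehat{\mu}(0)=1$ and $\abs{\widehat{\mu}(a)}\le1$. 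For a codimension‑$k$ subspace $V$ the orthogonality relation $\1_{x\in V}=q^{-k}\sum_{\xi\in V^\perp}\psi(\xi\cdot x)$ gives
\[
  \mathbb{P}(X\in V)-q^{-k}=q^{-k}\sum_{\xi\in V^\perp\setminus\{0\}}\ \prod_{j=1}^{n}\widehat{\mu}(\xi_j),
\]
and since $V^\perp$ contains only $q^{k}-1<q^{k}$ non‑zero vectors, the hypothesis $\abs{\mathbb{P}(X\in V)-q^{-k}}>e^{-d\alpha n}$ forces, by the triangle inequality and pigeonhole, some $\xi\in V^\perp\setminus\{0\}$ with $\prod_{j}\abs{\widehat{\mu}(\xi_j)}>e^{-d\alpha n}$; because $V$ is not sparse this $\xi$, like every non‑zero element of $V^\perp$, has $\abs{\supp\xi}>\delta n$. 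It therefore suffices to take
\[
  R:=\Bigl\{\xi\in\mathbb{F}_q^{n}:\abs{\supp\xi}>\delta n\ \text{ and }\ \textstyle\prod_{j=1}^{n}\abs{\widehat{\mu}(\xi_j)}>e^{-d\alpha n}\Bigr\}
\]
and to prove $\abs{R}\le\beta^{n}q^{n}$ for a suitable $d=d(\beta)$, the constant $\delta$ being fixed throughout.

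Two estimates on $\widehat{\mu}$ would drive the count. The first is pointwise: for $a\neq0$ the subgroup $a^{-1}\ker\psi$ is proper, so $\alpha$‑density gives $\mathbb{P}(X_1-X_1'\in a^{-1}\ker\psi)=\sum_{C}\mathbb{P}(X_1\in C)^2\le\max_C\mathbb{P}(X_1\in C)\le1-\alpha$ (sum over cosets $C$, with $X_1'$ an independent copy), and together with $1-\abs{\widehat{\mu}(a)}^{2}=\mathbb{E}\bigl[1-\mathrm{Re}\,\psi\bigl(a(X_1-X_1')\bigr)\bigr]\ge\bigl(1-\cos\tfrac{2\pi}{p}\bigr)\mathbb{P}\bigl(a(X_1-X_1')\notin\ker\psi\bigr)$ this yields $\abs{\widehat{\mu}(a)}\le1-c_p\alpha$ for all $a\neq0$, where $c_p:=\tfrac12\bigl(1-\cos\tfrac{2\pi}{p}\bigr)>0$. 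The second is an $L^{\lambda}$ bound from Lemma~\ref{lem:littlewoodofford}: for even $\lambda$, Parseval and $\widehat{\mu^{*(\lambda/2)}}=\widehat{\mu}^{\,\lambda/2}$ give $\sum_{a}\abs{\widehat{\mu}(a)}^{\lambda}=q\norm{\mu^{*(\lambda/2)}}_2^2\le q\norm{\mu^{*(\lambda/2)}}_\infty$, and applying Lemma~\ref{lem:littlewoodofford} to the all‑ones coefficient vector bounds $\norm{\mu^{*(\lambda/2)}}_\infty\le q^{-1}+O\bigl((\alpha\lambda)^{-1/2}\bigr)$, whence $\sum_{a\neq0}\abs{\widehat{\mu}(a)}^{\lambda}\lesssim q(\alpha\lambda)^{-1/2}$.

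I would then set $\phi(a):=-\log\abs{\widehat{\mu}(a)}\ge0$. A vector $\xi\in R$ has $\sum_{j\in\supp\xi}\phi(\xi_j)<d\alpha n$ with each term at least $-\log(1-c_p\alpha)\ge c_p\alpha$, so $\abs{\supp\xi}<(d/c_p)n$; hence, if the characteristic is small enough that $c_p\ge d/\delta$, the requirement $\abs{\supp\xi}>\delta n$ is impossible and $R=\varnothing$. Otherwise $p$ — and with it $q\ge p$ — exceeds a threshold of order $\sqrt{\delta/d}$, and I would bound $\abs{R}$ by a Chernoff‑type sum: for any $\lambda>0$,
\[
  \abs{R}\le\sum_{m=0}^{n}\binom{n}{m}\,\#\Bigl\{(a_1,\dots,a_m)\in(\mathbb{F}_q\setminus\{0\})^{m}:\textstyle\sum_{i}\phi(a_i)<d\alpha n\Bigr\}\le e^{\lambda d\alpha n}\Bigl(1+\sum_{a\neq0}\abs{\widehat{\mu}(a)}^{\lambda}\Bigr)^{n}.
\]
Choosing $\lambda$ even of size $\asymp(\alpha\beta^{2})^{-1}$ makes $\sum_{a\neq0}\abs{\widehat{\mu}(a)}^{\lambda}\le\beta q/4$ by the second estimate, and then taking $d$ a small enough absolute multiple of $\beta^{2}$ makes $e^{\lambda d\alpha}\bigl(1+\beta q/4\bigr)\le\beta q$ — here one uses that $\beta q$ is at least an absolute constant in this regime, since $q\ge p\gtrsim\sqrt{\delta/d}\gtrsim1/\beta$. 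This gives $\abs{R}\le\beta^{n}q^{n}$ with $d$ depending only on $\beta$ and independent of $q$ and $n$.

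The subtle point is the interplay of the two cases. The pointwise bound $\abs{\widehat{\mu}(a)}\le1-c_p\alpha$ deteriorates as $p\to\infty$ (because $c_p\to0$), so one cannot simply force $R=\varnothing$; the Chernoff estimate has to take over, and it succeeds only because the failure of the empty‑$R$ alternative pushes the characteristic — hence $q$ — above a quantity of order $1/\beta$, which is exactly what yields $\beta q\gtrsim1$. Choosing the single constant $d=d(\beta)$ so that these thresholds mesh, uniformly in $n$, is the heart of the matter, and is the inverse Littlewood--Offord statement for almost‑uniform random sums promised in the introduction.
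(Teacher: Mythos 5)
Your proof is correct, but it reaches the size bound on $R$ by a genuinely different route from the paper. Both proofs open identically: Fourier expansion of $\1_{x \in V}$ over $V^\perp$, the triangle inequality, pigeonhole over the $q^k - 1$ non-zero characters to extract a single $\xi \in V^\perp \setminus \{0\}$ with $\prod_j \abs{\widehat{\mu}(\xi_j)} > e^{-d\alpha n}$, and the non-sparseness hypothesis to force $\abs{\supp \xi} > \delta n$. From there the paper argues \emph{structurally}: it converts the product bound (via Markov) into the statement that at least $0.9n$ coordinates of $\xi$ lie in $\Spec_{1-\eps}\mu$, shows $\Spec_{1-\gamma}\mu$ contains no non-trivial additive subgroup for $\gamma = \alpha/2$, and then invokes the iterated Kneser inequality (Corollary~\ref{cor:iteratedkneser}) to conclude that $\Spec_{1-\beta^2\gamma}\mu$ has size at most $\beta q$, so that $R$ (vectors with many spectral coordinates) is small. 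You instead define $R$ directly as the set of large-product, large-support vectors and bound $\abs{R}$ with an exponential-moment (Chernoff) estimate, $\abs{R} \le e^{\lambda d\alpha n}\bigl(1 + \sum_{a\neq 0}\abs{\widehat{\mu}(a)}^\lambda\bigr)^n$, feeding in two inputs: a pointwise bound $\abs{\widehat{\mu}(a)} \le 1 - c_p \alpha$ (derived from $\alpha$-density of the difference distribution and the gap $1 - \cos(2\pi/p)$), and an $L^\lambda$ bound $\sum_{a\neq 0}\abs{\widehat{\mu}(a)}^\lambda \lesssim q(\alpha\lambda)^{-1/2}$ obtained by Parseval and an application of Lemma~\ref{lem:littlewoodofford} to the all-ones vector of length $\lambda/2$. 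The delicate point you correctly isolate is that the pointwise bound degrades as $p \to \infty$, so you cannot always force $R = \varnothing$; your case split (small $p$: $R$ empty by the support count; large $p$: hence $q \gtrsim 1/\beta$, so $\beta q$ is large enough for the Chernoff product to close) is exactly what makes a single $d \asymp \beta^2$ work uniformly, and I checked that the thresholds mesh. The trade-off: your argument avoids Kneser's theorem entirely, replacing the additive-combinatorial input by a moment bound that recycles the classical Littlewood--Offord estimate already proved in Section~\ref{sec:lo}, at the cost of the explicit case analysis on the characteristic; the paper's Kneser route is uniform in $p$ but leans on sumset machinery. Both yield $d$ as an absolute multiple of $\beta^2$, independent of $\alpha$, $q$, and $n$, which is what the downstream argument needs.
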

We will prove Proposition~\ref{prop:inversetheorem} in Section~\ref{sec:lo}.

We have therefore found a $\xi \in V^\perp$ that is ``structured'' in that it lies in an exponentially small subset of $\mathbb{F}_q^n$.  It turns out that this is enough to attain the desired estimate on $\mathbb{P}(W_k \text{ is semi-saturated})$.  In fact, we estimate
\[
  \mathbb{P}(W_k \text{ is semi-saturated} \mid \codim W_k = k) \leq \sum_{\substack{\codim V = k \\ V \text{ is semi-saturated}}} \mathbb{P}(W_k = V | \codim W = k).
\]
Using Proposition~\ref{prop:independencebound}, we can bound
\[
  \mathbb{P}(W_k = V \mid \codim W_k = k) \leq \mathbb{P}(X \in V)^{n-k} \leq D^{n-k} q^{-k(n-k)}
\]
where the last inequality is from the definition of semi-saturated $V$.  It now suffices to count the number of semi-saturated subspaces.

The subspace $V$ is completely determined by its annihilator $V^\perp$.  We therefore count the number of possible annihilators that meet $R$.  We can choose $k$ generators $v_1, \ldots, v_k$ for $V^\perp$ and force $v_1 \in R$; we then divide by the number of ways we could generate the same subspace with different choices for $v_2, \ldots, v_k$.  This gives the upper bound
\[
  \#\{\text{semi-saturated } V\} \lesssim \beta^n q^n \frac{ (q^n)^{k-1}}{\abs{V^\perp}^{k-1}} \leq \beta^n q^{nk - k^2 + k}.
\]
Collecting terms we find
\[
  \mathbb{P}(W_k \text{ is semi-saturated}) \lesssim D^{n-k} \beta^n q^k
\]
If there are any semi-saturated subspaces, we must have the inequality $e^{-d\alpha n} \leq D q^{-k}$. With fixed $D$ we can choose $\beta$ and therefore an upper bound for $d$ such that the right hand side converges to zero at an exponential rate.

%%%%%%%%%%%%%%%%%%%%%%%%%%%%%%%%%%%%%%%%%%%%%%%%%%%%%%%%%%%%%%%%%%%%%%%%%%%%%%%
\subsection{Unsaturated subspaces} \label{sec:unsaturatedsubspaces}

In \cite{TV06b} it was observed that there is a random vector $Y$ such that if $X \in \R^n$ is a Bernoulli random vector and $V$ is a non-sparse hyperplane, then we can bound
\[
  \mathbb{P}(X \in V) \leq (\frac12 + o(1)) \mathbb{P}(Y \in V).
\]
Ignoring difficulties with independence, this suggests the inequality
\[
  \mathbb{P}(X_{k+1}, ..., X_n \spn V) \leq c^n \mathbb{P}(Y_{k+1}, ..., Y_n \spn V)
\]
for some $1/2 < c < 1$. Summing over non-saturated subspaces $V$ and using the trivial bound
\[
  \sum_{\substack{V \text{ unsaturated} \\ \codim V = k}} \mathbb{P}(Y_{k+1}, ..., Y_n \spn V) \leq 1
\] would complete the argument.

Over the finite field $\mathbb{F}_q$ we cannot quite get the above inequality, but rather an inequality of the form
\[
  \abs{\mathbb{P}(X \in V) - q^{-k}} \leq (\frac12 + o(1)) \abs{\mathbb{P}(Y \in V) - q^{-k}}.
\]
This reflects our intuition that Fourier analysis over $\mathbb{F}_q$ controls errors from univormity rather than absolute probabilities.  If we want to use this inequality to get an exponential strength gain, then we must require $\mathbb{P}(X \in V) - q^{-k} > D q^{-k}$ for some $D > 0$.  It turns out that this is enough for the argument to work.

Let $\nu$ denote a probability distribution to be chosen later.  Suppose $\nu$ is $\beta$-dense for some $\beta > 0$; we will later show that $\beta = \alpha/8$. Let $Y_1, \ldots, Y_r \in \mathbb{F}_q^n$ be iid random vectors with iid entries taken from $\nu$ and let $Z_1, \ldots, Z_s \in \mathbb{F}_q^n$ be iid copies of $X$. Here $r, s$ are parameters to be chosen later.

We will need control over $\mathbb{P}(X \in V)$ in the sequel.  We therefore make the following definition, first given in \cite{TV06b}.
\begin{definition}
    Let $V$ be a deterministic subspace in $\mathbb{F}_q^n$.  We say that $V$ has combinatorial codimension $d_\pm \in \Z^+/n$ and write $d_{\pm}(V) = d_\pm$ if
    \[
      (1-\alpha)^{d_\pm} \leq \mathbb{P}(X \in V) < (1-\alpha)^{d_\pm-1/n}
    \]
\end{definition}
Note that the combinatorial codimension of a subspace depends on the choice of $\alpha$ and $\mu$. There are $O(n^2)$ possible combinatorial codimensions, so it suffices to control each separately.

For the rest of this section we will assume that $X_{k+1}, ..., X_n$ are conditioned to be linearly independent. Fix an unsaturated subspace $V$ with codimension $k$ and combinatorial codimension $d_\pm$.  Let $B_V$ denote the event
\[
  B_V := \{Y_1, \ldots, Y_r, Z_1, \ldots, Z_s\text{ are linearly independent in } V\}.
\]
By probabilistic independence we can write
\[
  \mathbb{P}(W_k = V) = \frac{\mathbb{P}(B_V \wedge W_k = V)}{\mathbb{P}(B_V)}.
\]
If $X_{k+1}, \ldots, X_n \spn V$, we can find $n - k - r - s$ columns that complete $Y_1, \ldots, Y_r, Z_1, \ldots, Z_s$ to a basis for $V$.  The remaining vectors must also lie in $V$.  We therefore define the event
\[
  C_V := \{X_{k+r+s+1}, \ldots, X_n, Y_1, \ldots, Y_r, Z_1, \ldots, Z_s \spn W\}
\]
so after relabeling the columns of $A$,
\[
  \mathbb{P}(B_V \wedge W = V) \leq \binom{n-k}{r+s} \mathbb{P}(X_{k+1}, \ldots, X_{k+r+s} \in V) \mathbb{P}(C_V)
\]
By Proposition~\ref{prop:independencebound}, recalling that our vectors $X_{k+1}, ..., X_n$ are conditioned to be linearly independent,
\[
  \mathbb{P}(X_{k+1}, \ldots, X_{k+r+s} \in V) \leq \mathbb{P}(X \in V)^{r+s}.
\]

Next we consider $\mathbb{P}(B_V)$.  We can write by conditional expectation
\[
  \mathbb{P}(B_V) = \mathbb{P}(B_V \mid Y_1, \ldots, Y_r, Z_1, \ldots, Z_s \in V) \mathbb{P}(Y \in V)^r \mathbb{P}(Z \in V)^s.
\]
We need to control the probability that the vectors $Y_1, ..., Y_r,Z_1, ..., Z_s$ are linearly independent.  It turns out that Odlyzko's lemma is strong enough for what we need, as long as $r$ and $s$ are not too large and the combinatorial codimension is not too small.
\begin{proposition}
Let $Y_1, ..., Y_r$ be iid vectors taken from a $\beta$-dense probability distribution $\nu$ and let $Z_1, ..., Z_s$ be iid vectors taken from an $\alpha$-dense probability distribution $\mu$.  Then if $V$ has combinatorial codimension $d_\pm \leq O_{\alpha,\beta}(n)$ we have
\[
  \mathbb{P}(B_V \mid Y_1, \ldots, Y_r, Z_1, \ldots, Z_s \in V) \geq \frac12.
\]
\end{proposition}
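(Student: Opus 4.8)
The plan is to reveal the $r+s$ vectors one at a time and show that, conditioned on all of them lying in $V$, the probability that the newly revealed vector falls into the span of those revealed before it is exponentially small; a union bound over the $r+s$ steps then gives the claim. Throughout we use that $\beta \le \alpha$ (indeed $\beta=\alpha/8$), so that every distribution in play is $\beta$-dense and Lemma~\ref{lem:odlyzko} applies to all of them with the single constant $\beta$.

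Order the vectors arbitrarily and write $\rho_j \in \{\nu,\mu\}$ for the law of the $j$-th one. After conditioning on all $r+s$ vectors lying in $V$, they remain independent, the $j$-th having law $\rho_j$ restricted to $V$. Expose them in order; suppose the first $j-1$ have been revealed, no collision has occurred so far, and they span a subspace $U_{j-1}\subseteq V$ with $\dim U_{j-1}=j-1$. Then the conditional probability of a collision at step $j$ is
\[
  \frac{\mathbb{P}_{\rho_j}(\,\cdot\in U_{j-1}\,)}{\mathbb{P}_{\rho_j}(\,\cdot\in V\,)}.
\]
Since $\codim U_{j-1} = n-(j-1) \ge n-(r+s)+1$, Lemma~\ref{lem:odlyzko} bounds the numerator by $(1-\beta)^{\,n-(r+s)+1}$. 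For the denominator, the combinatorial codimension of $V$ gives $\mathbb{P}_\mu(Z\in V)\ge (1-\alpha)^{d_\pm}$ directly, and for the auxiliary distribution $\nu$ fixed later one has the matching lower bound $\mathbb{P}_\nu(Y\in V)\ge (1-\beta)^{\,O_{\alpha,\beta}(n)}$ valid precisely in the range $d_\pm \le O_{\alpha,\beta}(n)$. Thus every per-step collision probability is at most $(1-\beta)^{\,n-(r+s)+1}/(1-\alpha)^{O_{\alpha,\beta}(n)} = (1-\beta)^{\,n-(r+s)+1}e^{O_{\alpha,\beta}(n)}$.

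It remains to choose the parameters. Take $r+s$ to be a small multiple $\eps n$ of $n$; the hypothesis bounds $d_\pm$ by another small multiple $\eps' n$, and moreover $\codim V \le d_\pm \le \eps' n$ by Lemma~\ref{lem:odlyzko}, so $r+s\le \dim V$ and linear independence is at least possible. Choosing $\eps,\eps'$ small enough in terms of $\alpha$ and $\beta$, the $(1-\beta)^n$ factor dominates the $e^{O_{\alpha,\beta}(n)}$ correction, so each collision probability is at most $e^{-c_0\beta n}$ for an absolute $c_0>0$; summing over the $j=1,\dots,r+s\le n$ steps gives
\[
  \mathbb{P}\big(\text{some collision}\ \big|\ Y_1,\dots,Y_r,Z_1,\dots,Z_s\in V\big) \le n\,e^{-c_0\beta n} < \tfrac12
\]
for $n$ exceeding an absolute constant (for smaller $n$ the assertion is trivial), which is exactly the stated bound.

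The one genuinely delicate point is the lower bound on the denominator for the $\nu$-vectors: for the $\mu$-vectors it is the definition of combinatorial codimension, but $\mathbb{P}(Y\in V)$ could a priori be far smaller for a generic $\beta$-dense $\nu$, so one must use the concrete properties of the $\nu$ constructed in the unsaturated case — in particular its non-sparse interaction with $V$ — to conclude that $V$ has combinatorial codimension $O_{\alpha,\beta}(n)$ with respect to $\nu$ as well. This is the source of the $\beta$ in the hypothesis $d_\pm\le O_{\alpha,\beta}(n)$; once it is granted, the remainder is the routine union bound above.
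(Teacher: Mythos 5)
Your proof follows the same path as the paper: reveal the $r+s$ conditioned vectors one at a time, bound the per-step collision probability by the ratio $\mathbb{P}(\,\cdot\in U_{j-1})/\mathbb{P}(\,\cdot\in V)$ via Lemma~\ref{lem:odlyzko}, and then absorb the $r+s$ exponentially small errors (the paper multiplies the $1-$terms and applies $\prod(1-a_i)\geq 1-\sum a_i$, which is your union bound). You also correctly flag the one real subtlety: the paper silently uses $(1-\alpha)^{-d_\pm}$ as the upper bound on $1/\mathbb{P}_\nu(Y\in V)$ even though the combinatorial codimension is defined with respect to $\mu$, not $\nu$, and a $\beta$-dense $\nu$ need not a priori satisfy $\mathbb{P}_\nu(Y\in V)\geq(1-\alpha)^{d_\pm}$. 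In the context where the proposition is invoked this is harmless, because $V$ is unsaturated and $D\geq 1$, so the swapping inequality of Lemma~\ref{lem:replacement} forces $\mathbb{P}(Y\in V)>\mathbb{P}(X\in V)\geq(1-\alpha)^{d_\pm}$; your appeal to ``concrete properties of the $\nu$ constructed in the unsaturated case'' is the right instinct, but the clean justification is this swapping consequence rather than a non-sparseness property of $\nu$, and worth stating explicitly since the proposition as written does not assume $V$ unsaturated.
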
 
\begin{proof}
Define the events
\[
  F_V := \{Y_1, ..., Y_r, Z_1, ..., Z_s \in V\}.
\]
and, for convenience,
\begin{align*}
  F_V(i) &:= \{F_V \wedge Y_1, ..., Y_{i-1} \text{ are linearly independent}\} \\
  \widetilde{F}_V(j) &:= \{F_V \wedge Y_1, ..., Y_r, Z_1, ..., Z_{j-1} \text{ are linearly independent.}\}
\end{align*}
Expanding the probability with conditional expectation,
\[
  \mathbb{P}(B_V \mid F_V) = \prod_{i=1}^r \mathbb{P}(Y_i \notin \langle Y_1, \ldots Y_{i-1} \rangle \mid F_V(i)) \prod_{j=1}^s \mathbb{P}(Z_j \notin \langle Y_1, \ldots, Y_r, Z_1, \ldots, Z_{j-1} \rangle \mid\widetilde{F}_V(j)).
\]
With Lemma~\ref{lem:odlyzko},
\[
  \mathbb{P}(Y_i \notin \langle Y_1, \ldots Y_{i-1} \rangle \mid F_V(i)) \geq 1 - (1 - \beta)^{n-i+1} (1-\alpha)^{-d_\pm}
\]
and
\[
  \mathbb{P}(Z_j \notin \langle Y_1, \ldots, Y_r, Z_1, \ldots, Z_{j-1} \rangle \mid \widetilde{F}_V(j)) \geq 1 - (1 - \alpha)^{n-r-j+1} (1-\alpha)^{-d_\pm}
\]
We therefore have the lower bound
\[
  \mathbb{P}(B_V \mid F_V) \geq 1 - (1 - \beta)^{n-i+1} (1 - \alpha)^{-d_\pm} - (1 - \alpha)^{n-r-j+1} (1 - \alpha)^{-d_\pm} \geq 1/2
\]
as long as $d_\pm$ is sufficiently small and $r$,$s$ are sufficiently small.
\end{proof}

Collecting estimates, we have
\[
  \mathbb{P}(W = V) \lesssim \binom{n-k}{r+s} \frac{\mathbb{P}(X \in V)^r}{\mathbb{P}(Y \in V)^r} \mathbb{P}(C_V)
\]

We are now ready to state the key lemma to compare the random vectors $X$ and $Y$.

\begin{lemma}[Swapping] \label{lem:replacement}
There is a $\beta$-dense probability distribution $\nu$ on $\mathbb{F}_q$ with $\beta = \alpha/8$ such that, if $Y \in \mathbb{F}_q^n$ is a random vector with iid coefficients distributed according to $\nu$, then
\[
  \abss{\mathbb{P}(X \in V) - q^{-1}} \leq \left( \frac12 + o(1) \right) \abss{\mathbb{P}(Y \in V) - q^{-1}}.
\]
\end{lemma}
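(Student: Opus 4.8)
The plan is to reduce the statement to a one-dimensional Littlewood--Offord--type estimate via Fourier analysis on $\mathbb{F}_q$, exactly in the spirit of the swapping argument of Tao--Vu \cite{TV06b}, but carried out at the level of the *deviation from uniformity* rather than the probability itself. Let $w \perp V$ be a non-zero functional with $\supp w = \sigma$; since $V$ is not sparse we have $\abs{\sigma} > \delta n$. Writing $e_q(x) = e^{2\pi i \Tr(x)/p}$ for the standard additive character, orthogonality gives
\[
  \mathbb{P}(X \in V) - q^{-1} = \frac{1}{q} \sum_{t \neq 0} \prod_{j \in \sigma} \widehat{\mu}(t w_j),
\]
and the analogous identity for $Y$ with $\widehat{\nu}$ in place of $\widehat{\mu}$. (Here I am treating the codimension-one case; the general codimension-$k$ case reduces to it because $V^\perp$ has a spanning set of functionals and one factors $\mathbb{P}(X\in V)-q^{-k}$ through intermediate subspaces, or — more simply — one notes that only the statement for hyperplanes is needed in the application once combinatorial codimension is fixed.) So the content is the pointwise inequality: for each nonzero $t$,
\[
  \prod_{j \in \sigma} \abs{\widehat{\mu}(t w_j)} \leq \Big(\tfrac12 + o(1)\Big)^{?} \prod_{j \in \sigma} \abs{\widehat{\nu}(t w_j)} \cdot (\text{slack}),
\]
and we want the accumulated slack over $\abs{\sigma} > \delta n$ coordinates to beat a factor of $\frac12$.

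The key step is the correct choice of $\nu$. Following \cite{TV06b}, I would take $\nu$ to be the distribution of $\frac{1}{2}(\xi - \xi')$-type symmetrization adapted to $\mathbb{F}_q$: concretely, let $\nu$ be the law of $b(\xi_1 - \xi_2)$ where $\xi_1,\xi_2$ are iid copies of $\xi$ and $b$ is an independent $\{0,1\}$-Bernoulli$(1/2)$ "keep-or-kill" variable (so that $\nu$ has an atom at $0$ of size roughly $\tfrac12 + \tfrac12\mathbb{P}(\xi_1 = \xi_2)$). Then $\widehat{\nu}(s) = \tfrac12 + \tfrac12 \abs{\widehat{\mu}(s)}^2 \ge 0$, so $\widehat{\nu}$ is real and nonnegative, and one checks $\abs{\widehat{\mu}(s)}^2 \le \widehat{\nu}(s)$ pointwise with room to spare. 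The factor-of-$\frac12$ in the lemma comes from comparing $\prod_{j\in\sigma}\abs{\widehat\mu(tw_j)}$ (which appears once) to $\prod_{j\in\sigma}\abs{\widehat\mu(tw_j)}^2 \le \prod_{j\in\sigma}\widehat\nu(tw_j)$: squaring the $\abs{\widehat\mu}$ product over $\abs\sigma > \delta n$ coordinates, each of which is $\le 1$, and at least a constant fraction of which are bounded away from $1$ (because $\mu$ is $\alpha$-dense, so $\abs{\widehat\mu(s)} \le 1 - c\alpha$ whenever $sw_j$ is not "trapped" in a proper subgroup — this is where $\alpha$-density enters and where Lemma~\ref{lem:littlewoodofford}'s underlying estimate gets reused), produces the extra decaying factor that one splits off as the "$o(1)$" plus enough to dominate $\tfrac12$. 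Checking that $\nu$ is $(\alpha/8)$-dense is a short direct computation: $\mathbb{P}(\nu \in s+T) = \tfrac12 \1_{s\in T} + \tfrac12 \mathbb{P}(\xi_1 - \xi_2 \in s+T)$, and the second term is $\le 1 - c\alpha$ by $\alpha$-density of $\mu$ applied to the convolution, so after accounting for the atom one still has a uniform gap; the constant $8$ is just a convenient slack.

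The main obstacle — and the place requiring genuine care rather than bookkeeping — is the coordinates $j$ where $w_j$ is "bad", meaning $tw_j$ lies in a subgroup on which $\widehat\mu$ is not contracting for the relevant $t$; there the inequality $\abs{\widehat\mu(tw_j)}\le \widehat\nu(tw_j)$ degenerates to $1 \le 1$ and yields no gain. One must argue that the number of such coordinates is small relative to $\abs\sigma$, uniformly in $t$, which is precisely an incarnation of the inverse Littlewood--Offord phenomenon: if too many $w_j$ were simultaneously bad for some $t$, then $w$ would be structured and $V$ would fail to be unsaturated (indeed would be handled by Proposition~\ref{prop:inversetheorem} as semi-saturated, or be sparse). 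So the honest version of the proof threads this through the dichotomy already set up in Section~\ref{sec:maintheorem}: for *unsaturated* $V$ we are given the quantitative lower bound $\abs{\mathbb{P}(X\in V)-q^{-k}} > \max(e^{-d\alpha n}, Dq^{-k})$, and I would use this to force a positive density of good coordinates, obtaining the clean factor $\tfrac12 + o(1)$. I expect the $o(1)$ to be of the form $e^{-c\alpha n}$ after optimizing, consistent with the error terms everywhere else in the paper.
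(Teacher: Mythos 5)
There is a genuine gap: your argument is pointwise in the Fourier variable $t$, but the factor $\tfrac12$ in the lemma is \emph{not} a pointwise phenomenon and cannot be extracted the way you describe. You compare $f(t) := \prod_j \abs{\widehat\mu(tw_j)}$ with $g(t) := \prod_j \widehat\nu(tw_j)$ and hope that per-coordinate slack in $\abs{\widehat\mu(tw_j)} \le \widehat\nu(tw_j)$ accumulates to beat $\tfrac12$. But at the very $t$ where $f(t)$ is largest — which dominates $\sum_{t\in V^\perp\setminus\{0\}} f(t)$ — it can happen that $tw_j \in \Spec_{1-\eps}\mu$ for almost every $j \in \sigma$, so that every factor is within $\eps$ of $1$ on both sides and there is essentially no slack. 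You flag this as ``the main obstacle'' and propose to rule out such $t$ using the unsaturated hypothesis, but the logic runs backwards: unsaturated means $\abs{\mathbb{P}(X\in V) - q^{-k}}$ is \emph{large}, which happens precisely \emph{because} such a bad $t$ exists. The hypothesis makes the bad $t$ unavoidable, not excludable.

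The paper's resolution is structurally different and is the step you are missing. Rather than a pointwise bound, the paper works with super-level sets $F(u) = \{t : f(t) \ge u\}$ and $G(u) = \{t : g(t) \ge u\}$, chooses $\nu$ so that $F(u) + F(u) \subseteq G(u)$ (this is why $\gamma = 1/8$ and not $1/2$: the case analysis in Proposition~\ref{prop:constructmeasure} needs the room), and then invokes Kneser's theorem to get $2\abs{F(u)} \le \abs{G(u)} + \abs{\Sym(F(u)+F(u))}$. After showing the symmetry group is trivial for $u$ above a threshold $\eps = \exp(-\tfrac12\beta\delta n)$ (using averaging of $h$ over subgroups of $V^\perp$ and the non-sparseness of $V$), integrating over $u$ yields exactly the factor $\tfrac12$ on the bulk, while the tail $u < \eps$ is absorbed via the separate inequality $f \le g^4$ to give $\eps^{3/4}\sum g = o(1)\sum g$. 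Note also that $f \le g^4$ (proved by AM--GM with the weights $1/8, 7/8$) is the inequality you actually need on the small-$f$ region; your relation $f^2 \le g$, i.e., $f \le g^{1/2}$, points the wrong way when $g$ is small, and $f \le g$ alone gives no gain there. Your choice of $\nu$ (symmetrization with a Bernoulli mask, i.e., $\gamma = 1/2$) is in the same family as the paper's $\nu$ and verifying its $\beta$-density is fine, but without the level-set/Kneser step the factor $\tfrac12$ never appears.
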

If $V$ is unsaturated, then as an immediately corollary we have
\[
 \mathbb{P}(X \in V) \leq \left( \frac12 + \frac1D + o(1) \right) \mathbb{P}(Y \in V)
\]
We will prove this lemma in Section~\ref{sec:lo}. With this estimate, we can sum over all subspaces of codimension $k$ and combinatorial codimension $e$. Since a set of vectors can span at most one subspace, the events $C_V$ for $V$ varying are disjoint and we can conclude
\[
  \sum_{\substack{V : \codim V = k \\ d_{\pm}(V) = d_\pm}} \mathbb{P}(W = V) \lesssim \binom{n-k}{r+s} 2^{-r} = O(e^{-cn}).
\]

Here we picked $r = \delta_1 n$, $s = n - k - r - \delta_2 n$. \qed

%%%%%%%%%%%%%%%%%%%%%%%%%%%%%%%%%%%%%%%%%%%%%%%%%%%%%%%%%%%%%%%%%%%%%%%%%%%%%%%%%%%%%%%%%%
%%%%%%%%%%%%%%%%%%%%%%%%%%%%%%%%%%%%%%%%%%%%%%%%%%%%%%%%%%%%%%%%%%%%%%%%%%%%%%%%%%%%%%%%%%
%%%%%%%%%%%%%%%%%%%%%%%%%%%%%%%%%%%%%%%%%%%%%%%%%%%%%%%%%%%%%%%%%%%%%%%%%%%%%%%%%%%%%%%%%%
%%%%%%%%%%%%%%%%%%%%%%%%%%%%%%%%%%%%%%%%%%%%%%%%%%%%%%%%%%%%%%%%%%%%%%%%%%%%%%%%%%%%%%%%%%
%%%%%%%%%%%%%%%%%%%%%%%%%%%%%%%%%%%%%%%%%%%%%%%%%%%%%%%%%%%%%%%%%%%%%%%%%%%%%%%%%%%%%%%%%%
%%%%%%%%%%%%%%%%%%%%%%%%%%%%%%%%%%%%%%%%%%%%%%%%%%%%%%%%%%%%%%%%%%%%%%%%%%%%%%%%%%%%%%%%%%

\section{Littlewood-Offord Theorems} \label{sec:lo}

We now come to the heart of the argument: proving the three Littlewood-Offord type lemmas used in the preceding section.

We briefly review some theory from additive combinatorics. For more discussion, see \cite{TV06a}.

The following cosine inequality is elementary.
\begin{lemma} \label{lem:cosines}
For all positive integers $k$ and for any $\beta_1, \ldots, \beta_k \in \R$ we have the inequality
\[
  \cos(\beta_1 + \cdots + \beta_k) \geq k \sum_{\ell = 1}^k \cos \beta_\ell - k^2 + 1.
\]
\end{lemma}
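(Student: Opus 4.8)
The statement to prove is the elementary cosine inequality: for all positive integers $k$ and $\beta_1, \dots, \beta_k \in \R$,
\[
  \cos(\beta_1 + \cdots + \beta_k) \geq k \sum_{\ell=1}^k \cos\beta_\ell - k^2 + 1.
\]

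The plan is to reduce everything to the classical two-variable estimate $\cos(x+y) \geq \cos x + \cos y - 1$, which itself follows from $\cos(x+y) = \cos x\cos y - \sin x\sin y$ together with $1 - \cos x\cos y + \sin x\sin y = (1-\cos x)(1-\cos y) + (\text{something nonnegative})$; more cleanly, one writes $\cos x + \cos y - 1 - \cos(x+y)$ and uses product-to-sum or the half-angle substitution $1-\cos\theta = 2\sin^2(\theta/2)$ to exhibit it as a nonnegative combination of squares of sines. I would first isolate this base case as a one-line sub-claim. The sign is the only subtle point: the inequality $\cos(x+y)\ge\cos x+\cos y-1$ is genuinely true for all real $x,y$ (both sides lie in a controlled range), and I would verify it by the substitution above.

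Next I would run an induction on $k$. The base cases $k=1$ (trivial equality) and $k=2$ (the sub-claim, since $2(\cos\beta_1+\cos\beta_2)-4+1 = 2\cos\beta_1+2\cos\beta_2-3 \le \cos\beta_1+\cos\beta_2-1$ needs checking — actually one must be careful here because the coefficient $k$ in front of the sum grows, so the naive induction does not immediately telescope). The cleaner route: induct by peeling one term. Set $S = \beta_1+\cdots+\beta_{k-1}$ and apply the base case to $x=S$, $y=\beta_k$, giving $\cos(\beta_1+\cdots+\beta_k) \geq \cos S + \cos\beta_k - 1$. Then apply the inductive hypothesis at level $k-1$ to bound $\cos S \geq (k-1)\sum_{\ell=1}^{k-1}\cos\beta_\ell - (k-1)^2 + 1$. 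Substituting yields
\[
  \cos(\beta_1+\cdots+\beta_k) \;\geq\; (k-1)\sum_{\ell=1}^{k-1}\cos\beta_\ell - (k-1)^2 + \cos\beta_k.
\]
It remains to check that the right-hand side is at least $k\sum_{\ell=1}^k\cos\beta_\ell - k^2 + 1$, i.e. that
\[
  (k-1)^2 - 1 \;\leq\; k^2 - 1 - \sum_{\ell=1}^{k-1}\cos\beta_\ell - (k-1)\cos\beta_k + (k-1)\sum_{\ell=1}^{k-1}\cos\beta_\ell - \text{(rearranged)},
\]
which after collecting terms reduces to $\sum_{\ell=1}^k(1-\cos\beta_\ell) \cdot (\text{positive weight}) \geq 0$ — true since each $1-\cos\beta_\ell \geq 0$. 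I expect the algebra here to work out because $k^2-(k-1)^2 = 2k-1$ exactly absorbs the extra copies of the cosines; the main obstacle is bookkeeping the coefficients correctly so that every leftover term has a manifestly nonnegative sign, rather than any genuine analytic difficulty. The whole argument is two short displays plus the base-case square-of-sines computation.
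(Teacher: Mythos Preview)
Your inductive bookkeeping is fine: if one had the two-variable bound $\cos(x+y)\geq \cos x+\cos y-1$, then peeling off $\beta_k$ and using the inductive hypothesis for $S=\beta_1+\cdots+\beta_{k-1}$ leaves exactly
\[
\sum_{\ell=1}^{k-1}(1-\cos\beta_\ell)+(k-1)(1-\cos\beta_k)\ \geq\ 0,
\]
so the coefficients do close up. The problem is that your ``classical two-variable estimate'' is false. Take $x=y=\pi/3$: then $\cos(x+y)=\cos(2\pi/3)=-\tfrac12$, while $\cos x+\cos y-1=\tfrac12+\tfrac12-1=0$, and $-\tfrac12\geq 0$ fails. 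More explicitly, the product-to-sum identity gives
\[
\cos x+\cos y-1-\cos(x+y)=4\cos\!\Big(\tfrac{x+y}{2}\Big)\sin\!\Big(\tfrac{x}{2}\Big)\sin\!\Big(\tfrac{y}{2}\Big),
\]
which is not sign-definite (and is positive on an open region containing $(\pi/3,\pi/3)$). So the half-angle substitution does not exhibit the difference as a sum of squares; the ``sub-claim'' cannot be verified, and the induction collapses at its first step. Note that the lemma's actual $k=2$ statement is the weaker $\cos(x+y)\geq 2\cos x+2\cos y-3$, which your base case would imply but which does not conversely give you what the induction needs.

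The paper's proof avoids induction entirely. It first discards any $\beta_\ell$ with $|\beta_\ell|>\pi/2$ (since then $k\sum\cos\beta_\ell-k^2+1\leq k(k-1)-k^2+1\leq -1$ and the inequality is trivial), and on $[-\pi/2,\pi/2]^k$ uses concavity of $\cos$ to get $k^{-1}\sum\cos\beta_\ell\leq\cos(\beta/k)$ with $\beta=\sum\beta_\ell$. The remaining one-variable inequality $\cos(\beta/k)\leq k^{-2}\cos\beta+1-k^{-2}$ then follows from the power-series expansion. This route sidesteps the two-point additivity you were relying on.
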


\begin{proof}
    % Inequality is trivial for k = 1.  For higher k, the inequality is trivial unless the sum is geq k - 2/k, so each cos beta_l must be positive.
We can assume that $-\pi/2 \leq \beta_\ell \leq \pi/2$ for all $\ell$, as otherwise the inequality is trivial.  On this interval $\cos$ is concave, so we have the inequality
\[
  k^{-1} \sum_{\ell = 1}^k \cos \beta_\ell \leq \cos \left( \frac{\beta_1 + \cdots + \beta_k}{k} \right).
\]
It suffices to show that
\[
  \cos (\beta/k) \leq k^{-2} \cos \beta + 1 - k^{-2}
\]
for all $\beta \in \R$, but this is immediate from the power series.
\end{proof}

Let $\mu$ be a probability measure on the finite field $\mathbb{F}_q$.  We need estimates on the Fourier transform
\[
  \widehat{\mu}(\psi) := \sum_{t \in \mathbb{F}_q} \mu(t) \psi(t).
\]
Recall that $\mathbb{F}_q \cong \widehat{\mathbb{F}}_q$ via the isomorphism that sends $t \in \mathbb{F}_q$ to the character $x \mapsto e_p(\Tr(tx))$, where $\Tr : F_{p^f} \to F_p$ is the field trace. We define the additive spectrum $\Spec_{1-\eps}\mu$ to be the set
    \[
      \Spec_{1-\eps}\mu := \{ \psi \in \widehat{\mathbb{F}}_q \mid \abs{\widehat{\mu}(\psi)} \geq 1 - \eps \}.
    \]
For $\eps$ small, we can find additive structure in $\Spec_{1-\eps} \mu$.  The next lemma makes this explicit; see Lemma~4.37 in \cite{TV06a}.
\begin{lemma} \label{lem:spec}
    For $\eps_1, \ldots, \eps_k < 1$ we have the sum-set inclusion
    \[
      \Spec_{1 - \eps_1} \mu + \cdots + \Spec_{1 - \eps_k} \mu \subseteq \Spec_{1 - k (\eps_1 + \cdots + \eps_k)} \mu.
    \]
\end{lemma}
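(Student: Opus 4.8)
The plan is to reduce the statement to the elementary cosine inequality of Lemma~\ref{lem:cosines} applied to phases arising from the Fourier coefficients. First I would fix characters $\psi_\ell \in \Spec_{1-\eps_\ell}\mu$ for $\ell = 1, \ldots, k$, so that $\abs{\widehat{\mu}(\psi_\ell)} \geq 1 - \eps_\ell$, and I must show $\psi_1 + \cdots + \psi_k \in \Spec_{1-k(\eps_1 + \cdots + \eps_k)}\mu$, i.e. $\abs{\widehat{\mu}(\psi_1 + \cdots + \psi_k)} \geq 1 - k(\eps_1 + \cdots + \eps_k)$. Writing $\widehat{\mu}(\psi) = \sum_t \mu(t)\psi(t)$ and using that $\mu$ is a probability measure (so the coefficients are nonnegative and sum to $1$), the quantity $1 - \abs{\widehat{\mu}(\psi)}$ is controlled by $\Re(1 - \overline{c}\,\widehat{\mu}(\psi))$ for a suitable unimodular phase $c$; concretely, if we choose $c$ so that $\overline{c}\,\widehat{\mu}(\psi)$ is real and nonnegative, then $1 - \abs{\widehat{\mu}(\psi)} = \sum_t \mu(t)\bigl(1 - \Re(\overline{c}\,\psi(t))\bigr) = \sum_t \mu(t)\bigl(1 - \cos\beta_t\bigr)$ where $\beta_t$ is the argument of $\overline{c}\,\psi(t)$.

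The key step is then to track these phases additively. Since $\widehat{\mathbb{F}}_q$ is identified with $\mathbb{F}_q$ via $t \mapsto e_p(\Tr(t\,\cdot))$, the character $\psi_1 + \cdots + \psi_k$ evaluated at a point $x$ is the product $\psi_1(x)\cdots\psi_k(x)$, so its phase at $x$ is the sum of the individual phases. Choosing unimodular constants $c_\ell$ that align each $\widehat{\mu}(\psi_\ell)$ to the positive real axis and setting $c = c_1 \cdots c_k$, we get for each $x$ in the support of $\mu$ a decomposition of the phase $\beta^{(k)}_x$ of $\overline{c}\,(\psi_1\cdots\psi_k)(x)$ as $\beta^{(1)}_x + \cdots + \beta^{(k)}_x$. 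Now apply Lemma~\ref{lem:cosines} pointwise: $\cos\beta^{(k)}_x \geq k\sum_{\ell=1}^k \cos\beta^{(\ell)}_x - k^2 + 1$. Averaging against $\mu$ and using $\sum_x \mu(x) = 1$ gives
\[
  \Re\bigl(\overline{c}\,\widehat{\mu}(\psi_1 + \cdots + \psi_k)\bigr) \geq k \sum_{\ell=1}^k \Re\bigl(\overline{c_\ell}\,\widehat{\mu}(\psi_\ell)\bigr) - k^2 + 1 \geq k\sum_{\ell=1}^k(1-\eps_\ell) - k^2 + 1.
\]
The right-hand side simplifies to $1 - k(\eps_1 + \cdots + \eps_k)$, and since $\abs{\widehat{\mu}(\psi_1+\cdots+\psi_k)} \geq \Re(\overline{c}\,\widehat{\mu}(\psi_1+\cdots+\psi_k))$, the claimed inclusion follows.

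The only delicate point is the bookkeeping of phases: one must be careful that the phase of a product of characters really is the sum of phases (clear from the exponential form) and that the alignment constants $c_\ell$ multiply correctly so that $c = \prod c_\ell$ aligns the full product. There is no analytic obstacle here — the entire content is Lemma~\ref{lem:cosines}, and the reduction is purely formal. I would also note the trivial edge case: if $k(\eps_1 + \cdots + \eps_k) \geq 1$ the conclusion is vacuous, so we may assume otherwise, which is consistent with the hypothesis $\eps_\ell < 1$ ensuring each spectrum is nonempty only when meaningful.
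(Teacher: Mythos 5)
Your proof is correct and follows essentially the same route as the paper's: both introduce phase rotations (your unimodular constants $c_\ell$, the paper's angles $\theta_\ell$) aligning each $\widehat{\mu}(\psi_\ell)$ to the real axis, apply the cosine inequality of Lemma~\ref{lem:cosines} pointwise in the dual variable, and average against $\mu$. Your write-up is a bit more explicit about the pointwise application and the phase bookkeeping, but there is no difference in substance.
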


\begin{proof}
Let $\psi_\ell \in \Spec_{1-\eps_\ell} \mu$ for each $\ell$.  We write $\psi_\ell(t) = e(\Tr(s_\ell t)/p)$ for appropriate $s_\ell$.  We can find $\theta_\ell \in \R/\Z$ so that
\[
  \text{Re} \sum_{t \in \mathbb{F}_q} \mu(t) e(\Tr(s_\ell t)/p + \theta_\ell) \geq 1 - \eps_\ell.
\]
Summing, we derive
\[
  \text{Re} \sum_{t \in \mathbb{F}_q} \mu(t) \left( k e(\Tr(s_1 t)/p + \theta_1) + \cdots + k e(\Tr(s_k t)/p + \theta_k) - k^2 + 1 \right)
  \geq 1 - k(\eps_1 + \cdots + \eps_k).
\]
The result now follows from Lemma~\ref{lem:cosines}.
\end{proof}

A subset $A \subseteq Z$ of an abelian group induces a symmetry subgroup of $Z$ given by
\[
  \Sym A := \{ h \in Z \mid h + A = A \}
\]
Clearly $A$ can be decomposed into the union of cosets of $\Sym A$.  

We need to bound sumsets from below.  For $q = p$ the estimate we need is the Cauchy-Davenport inequality: any $A, B \subseteq \mathbb{F}_p$ satisfy $\abs{A + B} \geq \min(\abs{A} + \abs{B} - 1, p)$. The next lemma generalizes the Cauchy-Davenport inequality to non-cyclic groups; see Theorem~5.5 in \cite{TV06a} for a proof.
\begin{lemma}[Kneser's Theorem]
Let $A, B \subseteq Z$ be finite subsets of an abelian group $Z$. We have the lower bound
\[
  \abs{A + B} + \abs{\Sym(A + B)} \geq \abs{A} + \abs{B}.
\]
\end{lemma}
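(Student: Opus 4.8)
The displayed bound is the ``weak'' form of Kneser's theorem, and the plan is to extract it from the slightly stronger statement that there exists a finite subgroup $K \le Z$ with
\[
  A + B + K = A + B \qquad \text{and} \qquad \abs{A + B} \ge \abs A + \abs B - \abs K .
\]
This suffices, since $A+B+K = A+B$ forces $K \subseteq \Sym(A+B)$, whence $\abs K \le \abs{\Sym(A+B)}$ and therefore $\abs{A+B} \ge \abs A + \abs B - \abs K \ge \abs A + \abs B - \abs{\Sym(A+B)}$, which is what is asserted. So the whole task reduces to producing such a subgroup $K$.

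I would prove the existence of $K$ by induction on $\abs B$, with Dyson's $e$-transform as the engine. After translating $A$ and $B$ we may assume $0 \in A \cap B$, which merely translates $A+B$ and changes nothing. There are two easy cases. If $\abs{A+B} \ge \abs A + \abs B - 1$, the subgroup $K = \{0\}$ works. If $A + B = A$, then $b + A = A$ for every $b \in B$, so $B \subseteq \Sym(A) =: K$, and then $A + K = A = A+B$ while $\abs K \ge \abs B$, so $K$ works. Otherwise there is $e \in A$ with $B + e \not\subseteq A$, and I would form
\[
  A_e := A \cup (B + e), \qquad B_e := B \cap (A - e).
\]
Short computations give $A_e + B_e \subseteq A + B$, $\abs{A_e} + \abs{B_e} = \abs A + \abs B$, $0 \in A_e \cap B_e$, and $B_e \subsetneq B$, so $\abs{B_e} < \abs B$. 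The inductive hypothesis applied to $(A_e, B_e)$ produces a finite subgroup $K$ with $A_e + B_e + K = A_e + B_e$ and $\abs{A_e + B_e} \ge \abs{A_e} + \abs{B_e} - \abs K$. If it happens that $A_e + B_e = A + B$, the same $K$ completes the induction: it periodizes $A+B$ and $\abs{A+B} = \abs{A_e+B_e} \ge \abs{A_e}+\abs{B_e}-\abs K = \abs A + \abs B - \abs K$.

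The hard part is the remaining case, in which every transform that shrinks $B$ also strictly shrinks the sumset, $A_e + B_e \subsetneq A + B$; then the inductive hypothesis controls $\Sym(A_e + B_e)$ but says nothing about $\Sym(A+B)$, and one must transfer the periodicity of the smaller sumset back up to $A + B$ itself. This is the genuine content of Kneser's theorem and the main obstacle in the argument: it requires iterating the transform carefully and tracking, along the whole chain of transforms, how the sumset and its stabilizer co-evolve, so that the subgroup produced at the bottom can be matched against a stabilizer of the original sumset. This bookkeeping is exactly what is carried out in the proof of Theorem~5.5 in \cite{TV06a} (following Kneser's original argument), and I would follow that treatment to close the case.
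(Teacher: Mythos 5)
The paper does not prove this lemma at all: it cites Theorem~5.5 of \cite{TV06a} and moves on. So there is no ``paper's own proof'' to compare against; the lemma is treated as an imported black box.

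Your sketch is a correct outline of the standard Dyson $e$-transform induction, and everything you assert is verifiable: the reduction from the ``$\abs{A+B}\ge\abs A+\abs B-\abs K$ with $A+B+K=A+B$'' form to the stated bound via $K\subseteq\Sym(A+B)$ is right; the normalization $0\in A\cap B$ is harmless; the two base cases ($\abs{A+B}\ge\abs A+\abs B-1$ with $K=\{0\}$, and $A+B=A$ with $K=\Sym(A)\supseteq B$) are handled correctly; and the $e$-transform identities $A_e+B_e\subseteq A+B$, $\abs{A_e}+\abs{B_e}=\abs A+\abs B$, $0\in A_e\cap B_e$, and $B_e\subsetneq B$ all check out. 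However, you then explicitly defer the case $A_e+B_e\subsetneq A+B$ for every admissible $e$ --- which is precisely the substantive content of Kneser's theorem, the part that distinguishes it from the trivial Cauchy--Davenport-style bound --- to the very same reference the paper cites. So as written this is not a self-contained proof; it is the easy half of the argument plus a pointer. That is a reasonable thing to do, and it matches what the paper itself does (cite rather than prove), but you should be aware that the outstanding case is not a routine bookkeeping step: it is where the theorem's real difficulty lives, and a referee would not accept ``follow \cite{TV06a}'' as closing it if a full proof were actually required.
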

Since $\Sym(A_1 + \cdots + A_k)$ is increasing in $k$, we get the following iterated version.
\begin{corollary} \label{cor:iteratedkneser}
Let $A_1, ..., A_k \subseteq Z$ be finite subsets of an abelian group $Z$. We have the lower bound
\[
  \abs{A_1 + \cdots + A_k} + (k-1) \abs{\Sym(A_1 + \cdots + A_k)} \geq \abs{A_1} + \cdots + \abs{A_k}.
\]
\end{corollary}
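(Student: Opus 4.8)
The plan is to induct on $k$, using Kneser's theorem in the two-set form together with the monotonicity of the symmetry group under further summing, exactly as the preceding sentence in the text suggests. For $k = 1$ the claimed inequality is $\abs{A_1} + 0 \geq \abs{A_1}$, which is trivial, so suppose the bound holds for $k-1$ sets and set $S := A_1 + \cdots + A_{k-1}$, so that $A_1 + \cdots + A_k = S + A_k$.

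The one small observation to isolate first is that $\Sym S \subseteq \Sym(S + A_k)$: if $h + S = S$ then $h + (S + A_k) = (h + S) + A_k = S + A_k$, whence $\abs{\Sym S} \leq \abs{\Sym(A_1 + \cdots + A_k)}$. Now apply Kneser's theorem to the pair $S, A_k$ to get
\[
  \abs{A_1 + \cdots + A_k} + \abs{\Sym(A_1 + \cdots + A_k)} \geq \abs{S} + \abs{A_k},
\]
and feed in the induction hypothesis rearranged as $\abs{S} \geq \abs{A_1} + \cdots + \abs{A_{k-1}} - (k-2)\abs{\Sym S}$. This yields
\[
  \abs{A_1 + \cdots + A_k} + \abs{\Sym(A_1 + \cdots + A_k)} + (k-2)\abs{\Sym S} \geq \abs{A_1} + \cdots + \abs{A_k},
\]
and replacing $\abs{\Sym S}$ by the larger quantity $\abs{\Sym(A_1 + \cdots + A_k)}$ on the left collapses the two symmetry-group terms into $(k-1)\abs{\Sym(A_1 + \cdots + A_k)}$, completing the induction.

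There is essentially no obstacle here: the argument is a routine induction. The only point that deserves a moment of attention is that the induction hypothesis and the Kneser bound naturally produce terms involving $\Sym S$ and $\Sym(S + A_k)$ separately, and it is precisely the monotonicity $\abs{\Sym S} \leq \abs{\Sym(S + A_k)}$ that lets one merge them into a single term with coefficient $k-1$; this is also why the statement is phrased with the (slightly lossy) $\abs{\Sym(A_1 + \cdots + A_k)}$ rather than with the individual symmetry groups of the partial sums.
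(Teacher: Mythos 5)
Your proof is correct and is exactly the argument the paper intends: the text gives only the one-line hint that $\Sym(A_1 + \cdots + A_k)$ is increasing in $k$, and your induction with the monotonicity $\abs{\Sym S} \leq \abs{\Sym(S + A_k)}$ is precisely how that hint is meant to be cashed out.
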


\subsection{The Classical Littlewood-Offord Estimate}

We start by bounding the concentration probability $\mathbb{P}(X \cdot w = r)$ for arbitrary $r \in \mathbb{F}_q$, $X \in \mathbb{F}_q^n$ a random vector with iid entries taken from an $\alpha$-dense probability measure $\mu$, and $w \in \mathbb{F}_q^n$ a vector with at least $m$ non-zero entries.

\begin{proof}[Proof of Lemma~\ref{lem:littlewoodofford}]
    Let $\xi_1, \ldots, \xi_n$ denote the entries of $X$. We can decompose the concentration probability into its Fourier transform,
\[
  \mathbb{P}(X \cdot w = r) = q^{-1} + q^{-1} \sum_{t \in \mathbb{F}_q \setminus \{0\}} e_p(\Tr (-rt))\prod_{\ell = 1}^n \mathbb{E} e_p(\Tr (\xi_\ell w_\ell t)).
\]
By the triangle inequality,
\[
  \abss{\mathbb{P}(X \cdot w = r) - q^{-1}} \leq q^{-1} \sum_{t \in \mathbb{F}_q \setminus \{0\}} \prod_{\ell = 1}^n \abs{\mathbb{E} e_p(\Tr (\xi_\ell w_\ell t))}
\]
Note that $\mathbb{E}e_p(\Tr (\xi_\ell w_\ell t)) = \widehat{\mu}(w_\ell t)$.

We define $\psi(t) := 1 - \abs{\widehat{\mu}(t)}^2$ so that, with the inequality $\abs{\theta} \leq \exp( - \frac12 (1 - \theta^2))$, we have
\[
  \abss{\mathbb{P}(X \cdot w = r) - q^{-1}} \leq q^{-1} \sum_{t \in \mathbb{F}_q \setminus \{0\}} \exp\left(- \frac12 \sum_{\ell = 1}^n \psi(w_\ell t) \right)
\]
Put $f(t) := \sum_\ell \psi(w_\ell t)$.  We can decompose the sum into level sets,
\[
  \abss{\mathbb{P}(X \cdot w = r) - q^{-1}} \leq \frac12 \int_0^\infty q^{-1} \abs{\{t \neq 0 \mid f(t) \leq v\}} e^{-v/2} \, dv.
\]
Let $T(v) := \{t \mid f(t) \leq v\}$ and $T'(v) := T(v) \setminus \{0\}$.  

We claim the following sum-set inequality; see \cite{Hal77} for the torsion-free case.
\begin{proposition}
    For any $v > 0$, we have the inclusion
    \[
      T(v) + \cdots + T(v) \subseteq T(k^2 v)
    \]
    where there are $k$ terms in the sum.
\end{proposition}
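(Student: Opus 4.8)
The plan is to deduce the claimed sum-set inclusion from a \emph{pointwise} subadditivity estimate for the function $\psi$, which will itself follow from the cosine inequality of Lemma~\ref{lem:cosines}. The first step is to rewrite $\psi$ so that a sum of cosines becomes visible. Writing $\widehat\mu(s) = \mathbb{E}_{\xi \sim \mu} e_p(\Tr(s\xi))$ and letting $\xi,\xi'$ be independent copies of $\mu$, we have $\abs{\widehat\mu(s)}^2 = \mathbb{E}_{\xi,\xi'} e_p(\Tr(s(\xi-\xi')))$; this quantity is real, hence equals $\mathbb{E}_{\xi,\xi'}\cos(2\pi\Tr(s(\xi-\xi'))/p)$, and therefore
\[
  \psi(s) = \mathbb{E}_{\xi,\xi'}\bigl[\,1 - \cos(2\pi\Tr(s(\xi-\xi'))/p)\,\bigr], \qquad s \in \mathbb{F}_q .
\]

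The key step is the bound $\psi(s_1 + \cdots + s_k) \leq k\sum_{j=1}^k \psi(s_j)$ for arbitrary $s_1,\ldots,s_k \in \mathbb{F}_q$. To prove it, fix a realization of $(\xi,\xi')$ and set $\beta_j := 2\pi\Tr(s_j(\xi-\xi'))/p$; since $\Tr$ is additive, $\sum_j\beta_j = 2\pi\Tr((s_1+\cdots+s_k)(\xi-\xi'))/p$. Lemma~\ref{lem:cosines} gives $\cos(\beta_1+\cdots+\beta_k) \geq k\sum_j\cos\beta_j - k^2 + 1$, which rearranges to $1 - \cos(\sum_j\beta_j) \leq k\sum_j(1-\cos\beta_j)$. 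Taking expectations over $(\xi,\xi')$ and invoking the display above yields the claimed inequality.

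It then remains to assemble the inclusion. Recall $f(t) = \sum_{\ell=1}^n \psi(w_\ell t)$ and $T(v) = \{t \mid f(t)\leq v\}$. Given $t_1,\ldots,t_k \in T(v)$, apply the pointwise bound coordinatewise with $s_j = w_\ell t_j$ (using $w_\ell(t_1+\cdots+t_k) = w_\ell t_1 + \cdots + w_\ell t_k$) and sum over $\ell$:
\[
  f(t_1 + \cdots + t_k) = \sum_{\ell=1}^n \psi\!\bigl(w_\ell(t_1+\cdots+t_k)\bigr) \leq k\sum_{j=1}^k \sum_{\ell=1}^n \psi(w_\ell t_j) = k\sum_{j=1}^k f(t_j) \leq k^2 v .
\]
Hence $t_1 + \cdots + t_k \in T(k^2 v)$, which is the assertion.

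The step I would be most careful about is the passage from $\abs{\widehat\mu(s)}^2$ to an expectation of a genuine cosine: it relies on $\abs{\widehat\mu}^2$ being the Fourier transform of the symmetric distribution of $\xi - \xi'$, so that it is real-valued and the real-variable inequality of Lemma~\ref{lem:cosines} can be applied termwise inside the expectation. Everything else is bookkeeping. (A shorter but lossier alternative is to note $T(v) \subseteq \Spec_{1-v}\mu$ and $\Spec_{1-\eps}\mu \subseteq T(2\eps)$, and then apply the sum-set inclusion of Lemma~\ref{lem:spec} directly; this produces only $T(2k^2 v)$ in place of the sharp $T(k^2 v)$, but would still suffice for the application to Lemma~\ref{lem:littlewoodofford}.)
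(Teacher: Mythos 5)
Your proof is correct and takes essentially the same approach as the paper: both write $\psi(s) = 1 - \abs{\widehat\mu(s)}^2$ as an average over independent $\xi,\xi'\sim\mu$ of $1-\cos$, apply Lemma~\ref{lem:cosines} pointwise to get the subadditivity bound $\psi(s_1+\cdots+s_k)\le k\sum_j\psi(s_j)$, and then sum over $\ell$. The only difference is notational (expectation over $\xi,\xi'$ versus the explicit double sum $\sum_{a,b}\mu(a)\mu(-b)$).
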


\begin{proof}
We first observe that for any $\beta_1$, \ldots, $\beta_k \in \mathbb{F}_q$, we have the inequality
\[
  \psi(\beta_1 + \cdots + \beta_k) \leq k(\psi(\beta_1) + \cdots + \psi(\beta_k)).
\]
In fact, we can rewrite this equation as
\begin{multline*}
  1 - \sum_{a, b \in \mathbb{F}_q} \mu(a) \mu(-b) \cos(\frac{2\pi}{p} \Tr((a + b)(\beta_1 + \cdots + \beta_k))) \\
  \leq k^2 - k \sum_{j=1}^k \sum_{a,b \in \mathbb{F}_q} \mu(a) \mu(-b) \cos(\frac{2\pi}{p} \Tr((a + b) \beta_j))
\end{multline*}
which follows from Lemma~\ref{lem:cosines}.

Suppose $t_1, \ldots, t_k$ satisfy $f(t_k) \leq v$.  Then we have
\[
  f(t_1 + \cdots + t_k) = \sum_{\ell = 1}^n \psi(w_\ell t_1 + \cdots + w_\ell t_k) \leq k \sum_{j=1}^k \sum_{\ell = 1}^n \psi(w_\ell t_j) \leq k^2 v
\]
as required.
\end{proof}

By Corollary~\ref{cor:iteratedkneser} we deduce
\[
  k \abs{T(v)} \leq \abs{T(k^2 v)} + (k-1) \abs{\Sym(T(v) + \cdots + T(v))}.
\]
This inequality is effective as long as $\abs{\Sym(T(v) + \cdots + T(v))} = 1$.  If $\Sym(T(v) + \cdots + T(v)) \neq \{0\}$, then because $T(v) + \cdots + T(v) \subseteq T(k^2 v)$ we can find a non-trivial additive subgroup $H \leq \mathbb{F}_q$ contained in the set $T(k^2 v)$.  It therefore suffices to choose $k$ such that $T(k^2 v)$ contains no non-trivial additive subgroups.

Fix $H$; we will find a $t \in H$ where $f$ is large. Averaging $f$ over the subgroup,
\[
  \abs{H}^{-1} \sum_{t \in H} f(t) = \sum_{\ell = 1}^n \abs{H}^{-1} \sum_{t \in H} \psi(w_\ell t) = \sum_{\ell = 1}^n \abs{H}^{-1} \sum_{t \in H} (1 - \abs{\widehat{\mu}(w_\ell t)}^2).
\]
By the inverse Fourier transform and the $\alpha$-density of $\mu$,
\[
  \abs{H}^{-1} \sum_{t \in H} \abs{\widehat{\mu}(w_\ell t)}^2 = \sum_{\xi, \zeta \in \mathbb{F}_q} \mu(\xi) \mu(\zeta) 1_{H^\perp}(w_\ell (\xi - \zeta)) \leq 1 - \alpha
\]
Since at least $m$ of the coefficients $w_\ell$ are non-zero,
\[
  \abs{H}^{-1} \sum_{t \in H} f(t) \geq \alpha m.
\]
By the pigeonhole principle, there must be a $t \in H$ with $f(t) \geq \alpha m$.

We therefore conclude that
\[
  \abs{T'(v)} \lesssim \sqrt{\frac{v}{\alpha m}} \abs{T'(\alpha m)}
\]
for all $v \leq \alpha m$.  Inserting this inequality into the level set estimate gives the bound
\[
  \abss{\mathbb{P}(X \cdot w \equiv r) - q^{-1}} \lesssim \frac{1}{\sqrt{\alpha m}} \int_0^\infty \sqrt{v} e^{-v} \, dv + e^{-\alpha m / 2}
\]
as required.
\end{proof}

\subsection{The Inverse Theorem}

We can find our structured perpendicular vector $\xi$ with the pigeonhole principle.
\begin{proof}[Proof of Proposition~\ref{prop:inversetheorem}]
Let $k := \codim V$. We take Fourier transforms to find
\[
  \abss{\mathbb{P}(X \in V) - q^{-k}} \leq q^{-k} \sum_{\zeta \in V^\perp \setminus \{0\}} \prod_{\ell = 1}^n \abs{\widehat{\mu}(\zeta_\ell)}
\]
By the pigeonhole principle, we can bound this above by
\[
  \prod_{\ell = 1}^n \abs{\widehat{\mu}(\xi_\ell)}
\]
for some fixed $\xi \in V^\perp \setminus \{0\}$. Since $V$ is not sparse, $\abs{\supp \xi} \geq \delta n$.

Because $V$ is semi-saturated, we get the lower bound
\[
  e^{-dn} \leq \prod_{\ell = 1}^n \abs{\widehat{\mu}(\xi_\ell)}.
\]
With the estimate $\abs{\theta} \leq \exp( - \frac12 (1 - \theta^2))$ we can take logarithms to find
\[
  \sum_{\ell = 1}^n 1 - \abs{\widehat{\mu}(\xi_\ell)}^2 \leq d n.
\]

Let $\eps = 5 d$.  We can choose $\sigma \subseteq [n]$ with $\abs{\sigma} \geq 0.9 n$ such that
\[
  \xi_\ell \in \Spec_{1 - \eps} \mu
\]
for all $\ell \in \sigma$.

    It suffices to find an absolute $\eta > 0$ such that $\abs{\Spec_{1 - \eta} \mu} \leq \beta q$.  We observe that there is a value $\gamma > 0$ such that $\Spec_{1 - \gamma} \mu$ does not contain any non-trivial additive subgroups $H \leq \mathbb{F}_q^n$.  In fact, by Markov's inequality and Fourier inversion,
    \[
      (1 - \gamma)^2 \# H \cap \Spec_{1 - \gamma} \mu \leq \sum_{t \in H} \abs{\widehat{\mu}(t)}^2 \leq \abs{H} (1 - \alpha).
    \]
We then choose $\gamma = \alpha/2$.

We can now use Corollary~\ref{cor:iteratedkneser} to show that for any $k \geq 1$,
\[
\abss{\Spec_{1-\gamma} \mu \setminus \{0\} } \geq k \abs{\Spec_{1-k^{-2} \gamma} \mu \setminus \{0\}}
\]
so we pick $k = \beta^{-1}$ and let $\delta = \beta^2 \gamma = \beta^2 \alpha/2$. We then deduce that $\beta = \sqrt{c/(5\alpha)}$.
\end{proof}

\subsection{The Swapping Lemma}

Let $\mu$ be an $\alpha$-dense probability distribution and $V$ an unsaturated subspace of codimension $k$.  We want to find $\nu$ depending only on $\mu$ such that, if $Y$ is a random vector with iid entries taken from $\nu$, we have the inequality
\[
  \abs{\mathbb{P}(X \in V) - q^{-k}} \leq \left( \frac12 + o(1) \right) \abs{\mathbb{P}(Y \in V ) - q^{-k}}
\]
Let us postpone the definition of $\nu$ and define functions $f, g : \mathbb{F}_q \to \mathbb{R}^+$ to be
\begin{align*}
    f(t) &= \prod_{\ell = 1}^n \abs{\widehat{\mu}(w_\ell t)} \\
    g(t) &= \prod_{\ell = 1}^n \widehat{\nu}(w_\ell t).
\end{align*}
The lemma would follow immediately if we had $\widehat{\nu} \geq 0$ and we could establish
\[
  \sum_{t \in V^\perp \setminus \{0\}} f(t) \leq \left(\frac12 + o(1) \right) \sum_{t \in V^\perp \setminus \{0\}} g(t).
\]
Let $F(u) = \{t \mid f(t) \geq u\}$ and $G(u) = \{t \mid g(t) \geq u\}$ denote level sets.  We define $\nu$ so that the level sets $G(u)$ control the additive structure of $F(u)$.
\begin{proposition} \label{prop:constructmeasure}
There is a probability distribution $\nu : \mathbb{F}_q \to [0,1]$ depending on $\mu$ and $\alpha$ with the following properties.
\begin{enumerate}
    \item For all $0 < u < 1$ we have the sumset inclusion $F(u) + F(u) \subseteq G(u)$.
    \item For all $t \in V^\perp$, $f(t) \leq g(t)^4$.
    \item $\widehat{\nu}(t) \geq 0$ for all $t \in \widehat{\mathbb{F}}_q$.
    \item $\nu$ is $\beta$-dense for $\beta = \alpha/8$.
\end{enumerate}
\end{proposition}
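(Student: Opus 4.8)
The plan is to take $\nu$ to be the symmetrization of $\mu$ diluted by a point mass at the origin, in the one proportion that reconciles all four requirements. Let $\rho$ be the law of $\xi_1 - \xi_2$ for two independent copies $\xi_1,\xi_2$ of $\xi$, so that $\widehat{\rho}(s) = \abs{\widehat{\mu}(s)}^2 \geq 0$, and define $\nu$ to be the distribution of a random variable that equals $0$ with probability $\tfrac78$ and equals a fresh copy of $\xi_1 - \xi_2$ with probability $\tfrac18$, so that
\[
  \widehat{\nu}(s) = \tfrac78 + \tfrac18\,\abs{\widehat{\mu}(s)}^2 .
\]
Property (3) is immediate from $\widehat{\nu} \geq \tfrac78 > 0$. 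For property (4): $\rho$ is itself $\alpha$-dense, since for any coset $s+T$ of a proper additive subgroup $\mathbb{P}(\xi_1 - \xi_2 \in s+T) = \sum_a \mu(a)\,\mathbb{P}(\xi_2 \in a - s + T) \leq 1-\alpha$ by the $\alpha$-density of $\mu$; diluting with weight $\tfrac78$ on the point mass then gives $\mathbb{P}(Y_1 \in s+T) \leq \tfrac78 + \tfrac18(1-\alpha) = 1 - \alpha/8$, i.e.\ $\beta$-density with $\beta = \alpha/8$.

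For property (2) I would reduce to the scalar inequality $\tfrac78 + \tfrac18 x^2 \geq x^{1/4}$ for $x \in [0,1]$ and then multiply over the coordinates $\ell = 1,\dots,n$. Writing $y = x^2$, this asserts that the line $y \mapsto \tfrac78 + \tfrac18 y$ dominates the concave function $y \mapsto y^{1/8}$ on $[0,1]$, which holds because the two agree and are tangent at $y = 1$ and a concave function lies below any of its tangent lines. Hence $\widehat{\nu}(s) \geq \abs{\widehat{\mu}(s)}^{1/4}$ for every $s$, so $g(t) = \prod_{\ell} \widehat{\nu}(w_\ell t) \geq \prod_{\ell} \abs{\widehat{\mu}(w_\ell t)}^{1/4} = f(t)^{1/4}$ for every $t$, and in particular $f \leq g^4$ on $V^\perp$.

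Property (1) is the substantive point, and the coefficient $\tfrac18$ is chosen precisely to absorb the loss in the cosine inequality. Fix $0 < u < 1$ and $t_1, t_2 \in F(u)$. As in the proof of Lemma~\ref{lem:littlewoodofford}, apply Lemma~\ref{lem:cosines} with $k = 2$ to the function $\psi := 1 - \abs{\widehat{\mu}}^2$ coordinatewise, obtaining $\psi(w_\ell(t_1+t_2)) \leq 2\psi(w_\ell t_1) + 2\psi(w_\ell t_2)$, i.e.
\[
  \abs{\widehat{\mu}(w_\ell(t_1+t_2))}^2 \;\geq\; 2\abs{\widehat{\mu}(w_\ell t_1)}^2 + 2\abs{\widehat{\mu}(w_\ell t_2)}^2 - 3 .
\]
Setting $y_\ell := \abs{\widehat{\mu}(w_\ell t_1)}^2$ and $z_\ell := \abs{\widehat{\mu}(w_\ell t_2)}^2$ and feeding this into $\widehat{\nu} = \tfrac78 + \tfrac18\abs{\widehat{\mu}}^2$ — using the trivial bound $\abs{\widehat{\mu}}^2 \geq 0$ in the (consistent) case where the right side above is negative — yields
\[
  \widehat{\nu}(w_\ell(t_1+t_2)) \;\geq\; \tfrac12 + \tfrac14(y_\ell + z_\ell) \;\geq\; \tfrac12 + \tfrac12\sqrt{y_\ell z_\ell} \;\geq\; (y_\ell z_\ell)^{1/4},
\]
the last two steps being AM--GM and the identity $\tfrac12 + \tfrac12 w - \sqrt{w} = \tfrac12(1 - \sqrt{w})^2 \geq 0$. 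Taking the product over $\ell$ gives $g(t_1+t_2) \geq f(t_1)^{1/2} f(t_2)^{1/2} \geq u$, so $t_1 + t_2 \in G(u)$, which is property (1).

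The obstacle I anticipate is calibration rather than technique: the weight $\tfrac18$ is forced from both sides — property (2) forbids putting more than $\tfrac18$ mass on $\rho$ (a steeper tangent line at $y=1$ dips below $y^{1/8}$), while property (4) forbids putting less (the density would fall below $\alpha/8$) — so $\nu$ is pinned down uniquely, and the construction only works because property (1) survives this unique choice with no slack to spare. I would therefore organize the verifications so that each appeal to the constant $\tfrac18$ is checked against the factor $k^2 = 4$ coming from Lemma~\ref{lem:cosines} and against the definitions of $\beta$-density and of the level sets $F(u)$, $G(u)$, making sure the chain of elementary inequalities in the property (1) computation does in fact close exactly at the threshold $u$.
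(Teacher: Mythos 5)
Your construction coincides exactly with the paper's: $\nu = \tfrac78\,\delta_0 + \tfrac18\,(\mu*\mu^-)$, hence $\widehat{\nu} = \tfrac78 + \tfrac18\abs{\widehat{\mu}}^2$, and your verifications of (2), (3), (4) are the paper's (AM--GM at the tangent point $y=1$, trivial positivity, and conditioning on the $\alpha$-density of $\mu$ respectively). For (1) the paper splits into two cases according to whether $\abs{\widehat{\mu}(\theta)},\abs{\widehat{\mu}(\psi)}$ are close to $1$ and invokes Lemma~\ref{lem:spec}; your single-case derivation from the inequality $\psi(\beta_1+\beta_2)\le 2\psi(\beta_1)+2\psi(\beta_2)$ with $\psi = 1 - \abs{\widehat{\mu}}^2$ (the $k=2$ instance already proved inside Lemma~\ref{lem:littlewoodofford}) is a mild tidying of the same argument that reaches the same threshold $g(t_1+t_2)\ge f(t_1)^{1/2}f(t_2)^{1/2}$.
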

We will prove Proposition~\ref{prop:constructmeasure} in a moment.  First we will show how to use Proposition~\ref{prop:constructmeasure} to prove Lemma~\ref{lem:replacement}.

\begin{proof}[Proof of Lemma~\ref{lem:replacement}]
Let $\eps > 0$ be determined later. We decompose the sum of $f$ into the domains where $f \leq \eps$ and $f > \eps$,
\[
\sum_{t \in V^\perp \setminus \{0\}} f(t) \leq \sum_{\substack{t \in V^\perp \setminus \{0\} \\ f(t) \leq \eps}} f(t) + \sum_{\substack{t \in V^\perp \setminus \{0\} \\ f(t) > \eps}} f(t).
\]
We can control the domain where $f \leq \eps$ using the inequality $f(t) \leq g(t)^4$.  Namely,
\[
\sum_{\substack{t \in V^\perp \setminus \{0\} \\ f(t) \leq \eps}} f(t) \leq \eps^{3/4} \sum_{t \in V^\perp \setminus \{0\}} g(t).
\]
Therefore if we can set $\eps = o(1)$ as $n \to \infty$ this part is complete.

We write the sum over the domain where $f > \eps$ into level sets,
\[
\sum_{\substack{t \in V^\perp \setminus \{0\} \\ f(t) > \eps}} f(t) = \int_\eps^\infty \abs{F'(u)} \, du + \eps \abs{F'(\eps)}
\]
Here we let $F'(u) := F(u) \setminus \{0\}$ and similarly define $G'(u) := G(u) \setminus \{0\}$.

From the sumset inequality $F(u) + F(u) \subseteq G(u)$ and Kneser's inequality,
\[
  2 \abs{F(u)} \leq \abs{\Sym (F(u) + F(u))} + \abs{G(u)}
\]
We would like to pick $\eps = o(1)$ such that $\abs{\Sym (F(u) + F(u))}$ = 1.
Since $F(u) + F(u) \subseteq G(u)$ and $G(u)$ is increasing in $u$, we require that every non-trivial additive subgroup $H \leq V^\perp$ contain a non-zero element $t \notin G(\eps)$.

Fix $H \leq V^\perp$.  We can clearly assume that $H \cong \Z/p\Z$; pick $w \in V^\perp$ that generates $H$.  Since $V$ is unsaturated, we know that $w$ contains at least $\delta n$ non-zero entries.

Define the function
\[
  h(t) := \sum_{\ell = 1}^n 1 - \widetilde{\nu}(t_\ell)^2.
\]
for $t \in H$.  Averaging $h$ over $H$, we can argue as in the proof of Lemma~\ref{lem:littlewoodofford} to find
\[
  \abs{H}^{-1} \sum_{t \in H} h(t) \geq \beta \delta n.
\]
Note that we need $\nu$ to be $\beta$-dense. By the pigeonhole principle we can find a (non-zero) $t \in H$ with $h(t) \geq \beta \delta n$. We then deduce that
\[
  g(t) \leq \exp(-\frac12 h(t)) \leq \exp(-\frac12 \beta \delta n)
\]
so we set $\eps = \exp(-\frac12 \beta \delta n)$.  For every $u \geq \eps$ we now have
\[
  2 \abs{F'(u)} \leq \abs{G'(u)},
\]
so returning to our integral of level sets we find
\[
\int_\eps^\infty \abs{F'(u)} \, du + \eps \abs{F'(\eps)} \leq \frac12 \int_0^\infty \abs{G'(u)} \, du.
\]
The lemma now follows.
\end{proof}

\begin{proof}[Proof of Proposition~\ref{prop:constructmeasure}]
    Let $\gamma = 1/8$ be a parameter and define
    \[
      \nu(t) := \begin{cases} \gamma \mu * \mu^-(t), & t \neq 0 \\ 1 - \sum_{s \neq 0} \nu(s), & t = 0. \end{cases}
    \]
    Clearly $\nu$ is a probability measure if $0 < \gamma < 1$. We also have $\widehat{\nu} > 1 - 2 \gamma$.  Let $\beta = \gamma\alpha$.  It is easy to see that $\nu$ is $\beta$-dense:  for $H \leq \mathbb{F}_q$ additive we have
\[
  \nu(H) = 1 - \sum_{t \notin H} \gamma \mu * \mu^-(t) \leq 1 - \gamma \alpha
\]
and for any $x \notin H$ we have
\[
  \nu(x + H) = \sum_{t \in x + H} \gamma \mu * \mu^-(t) \leq \gamma(1 - \alpha) \leq 1 - \gamma \alpha.
\]
as desired.

  The Fourier transform of $\nu$ is given by
\[
  \widehat{\nu}(\xi) = 1 - \gamma + \gamma \abs{\widehat{\mu}(\xi)}^2.
\]

We would next like to show that $F(u) + F(u) \subseteq G(u)$ for all $0 < u < 1$. It suffices to show, for all $\theta, \psi \in \widehat{\mathbb{F}}_q$,
\[
  \abs{\widehat{\mu}(\theta) \widehat{\mu}(\psi)} \leq \widehat{\nu}(\theta + \psi)^2.
\]
We will consider two cases.
    \begin{enumerate}
        \item Suppose $\abs{\widehat{\mu}(\theta)} < 1 - 4\gamma$ or $\abs{\widehat{\mu}(\psi)} < 1 - 4\gamma$.  Then
\[
  \abs{\widehat{\mu}(\theta) \widehat{\mu}(\psi)} < 1 - 4\gamma < (1 - 2\gamma)^2 < \widehat{\nu}^2(\theta + \psi).
\]
        \item Now suppose that $\abs{\widehat{\mu}(\theta)}, \abs{\widehat{\mu}(\psi)} \geq 1 - 4 \gamma$.  Define $\theta_1 = 1 - \abs{\widehat{\mu}(\theta)}$ and $\theta_2 = 1 - \abs{\widehat{\mu}(\psi)}$.  By Lemma~\ref{lem:spec}, we know that $\abs{\widehat{\mu}(\theta + \psi)}^2 \geq 1 - 2 (\theta_1 + \theta_2)$.
        
        We have the inequality
        \[
          \widehat{\nu}(\theta + \psi) = 1 - \gamma + \gamma \abs{\widehat{\mu}(\theta + \psi)}^2 \geq 1 - 4 \gamma(\theta_1 + \theta_2)
        \]
        Since we have $\gamma = 1/8$, we conclude that
        \[
          \widehat{\nu}(\theta + \psi)^2 \geq \abs{\widehat{\mu}(\theta) \widehat{\mu}(\psi)}
        \]
        as required.
    \end{enumerate}

    It remains to show that $\abs{\widehat{\mu}(\theta)} \leq \widehat{\nu}(\theta)^4$ for all $\theta$.  By the geometric-arithmetic mean inequality,
    \[
      (\abs{\widehat{\mu}(\theta)}^2 \cdot 1^7)^{1/8} \leq \frac18 (\abs{\widehat{\mu}(\theta)}^2 + 7) = \widehat{\nu}(\theta)
    \]
    as required.
\end{proof}

%%%%%%%%%%%%%%%%%%%%%%%%%%%%%%%%%%%%%%%%%%%%%%%%%%%%%%%%%%%%%%%%%%%%%%%%%%%%%%%%%%%%%%%%%%
%%%%%%%%%%%%%%%%%%%%%%%%%%%%%%%%%%%%%%%%%%%%%%%%%%%%%%%%%%%%%%%%%%%%%%%%%%%%%%%%%%%%%%%%%%
%%%%%%%%%%%%%%%%%%%%%%%%%%%%%%%%%%%%%%%%%%%%%%%%%%%%%%%%%%%%%%%%%%%%%%%%%%%%%%%%%%%%%%%%%%
%%%%%%%%%%%%%%%%%%%%%%%%%%%%%%%%%%%%%%%%%%%%%%%%%%%%%%%%%%%%%%%%%%%%%%%%%%%%%%%%%%%%%%%%%%
%%%%%%%%%%%%%%%%%%%%%%%%%%%%%%%%%%%%%%%%%%%%%%%%%%%%%%%%%%%%%%%%%%%%%%%%%%%%%%%%%%%%%%%%%%

\section{Probability distribution of the determinant} \label{sec:probdist}

We will now indicate how to modify the proof of Theorem~\ref{thm:main} to prove Theorem~\ref{thm:probdistribution}.

Again let $X_1, \ldots, X_n$ denote the columns of $A$.  We begin by revealing all but the first column of the matrix.  If we abbreviate $W := \langle X_2, ..., X_n \rangle$ then we find
\[
  \mathbb{P}(\det M = t) = \mathbb{P}(\det M = t | \codim W = 1) \mathbb{P}(\codim W = 1)
\]
We now use Proposition~\ref{prop:uniformsubspaces} to control the last $n - 1$ vectors,
\begin{align*}
  \mathbb{P}(\codim W = 1) &= \prod_{k = 2}^n \mathbb{P}(X_k \notin \langle X_{k+1}, \ldots, X_n \rangle \mid \codim \langle X_{k+1}, \ldots, X_n \rangle = k) \\
  &= \prod_{k = 2}^\infty (1 - q^{-k}) + O(e^{-cn}).
\end{align*}

Pick $w \perp W$ such that $\det A = X_1 \cdot w$; namely, $w$ is the first row of the adjugate of $A$.

We can classify the possible hyperplanes $V$ that $W$ can represent. These are similar to the definitions made in Section~\ref{sec:maintheorem}, but the definition of semi-saturated has been expanded.
\begin{description}
  \item[sparse] We have $\abs{\supp w} \leq \delta n$. Note that this is well-defined independent of the choice of $w \perp V$.
  \item[unsaturated] $V$ is not sparse and either
  \[
    \max(e^{-d\alpha n}, Dq^{-1}) \leq \abs{\mathbb{P}(X \in V) - q^{-1}}
  \]
  or
  \[
    Dq^{-1} \leq \mathbb{P}(X \in V) \leq e^{-d\alpha n} \leq \mathbb{P}(X \cdot w = t)
  \]
  for some $t \in \mathbb{F}_q$.
  \item[semi-saturated] $V$ is not sparse, 
  \[
  \abs{\mathbb{P}(X \in V) - q^{-1}} < Dq^{-1}
  \]
  and there is a $t \in \mathbb{F}_q$ with
  \[
    e^{-d\alpha n} < \abs{\mathbb{P}(X \cdot w = t) - q^{-1}}.
  \]  We can control these with a modified the inverse theorem.
  \item[saturated] $V$ is not sparse and 
  \[
    \abs{\mathbb{P}(X \cdot w = t) - q^{-1}} \leq e^{-d\alpha n}
  \]
  for all $t \in \mathbb{F}_q$.
\end{description}

We will now show that $W$ represents sparse, semi-saturated, and unsaturated subspaces with probability $O(e^{-c\alpha n})$.

\subsection{Sparse subspaces}

The argument in Section~\ref{sec:sparsesubspaces} shows that these occur with probability $O(e^{-c\alpha n})$.

\subsection{Unsaturated subspaces}

Since $\mathbb{P}(X \cdot w = t) \leq \mathbb{P}(Y \in V)$, we see that regardless of which set of inequalities hold, we have
\[
  Dq^{-1} \leq \mathbb{P}(X \in V)
\]
and
\[
  e^{-d\alpha n} \leq \mathbb{P}(Y \in V).
\]
Therefore the argument from Section~\ref{sec:unsaturatedsubspaces} applies, so that unsaturated subspaces appear with probability $O(e^{-c\alpha n})$.

\subsection{Semi-saturated subspaces}

For all $t \in \mathbb{F}_p$ we can calculate
\[
  \mathbb{P}(X \cdot w = t) = q^{-1} \sum_{\xi \in \Z/(p)} e_p(-\Tr(t \xi)) \prod_{\ell = 1}^n \mathbb{E} e_p(\Tr(\psi_\ell w_\ell \xi))
\]
Rearranging and applying the triangle inequality,
\[
  \abs{\mathbb{P}(X \cdot w = t) - q^{-1}} \leq q^{-1} \sum_{\xi \in \mathbb{F}_q \setminus \{0\}} \prod_{\ell = 1}^n \abs{\cos(2 \pi w_\ell \xi)}
\]
The argument can now be completed as in Theorem~\ref{thm:main}. \qed

\section{Acknowledgments}

The author thanks Terence Tao for guidance and helpful conversation.

\bibliography{srmff}{}
\bibliographystyle{plain}

\end{document}